\newtheorem{theorem}{Theorem}[section]
\newtheorem{definition}[theorem]{Definition}
\newtheorem{lemma}[theorem]{Lemma}
\newtheorem{proposition}[theorem]{Proposition}
\newtheorem{remark}[theorem]{Remark}
\def\text{\mbox}
\def\bi{\begin{itemize}}
\def\ei{\end{itemize}}
\def\bem{\begin{emuerate}}
\def\eem{\end{enumerate}}
\def\beq{\begin{eqnarray*}}
\def\eeq{\end{eqnarray*}}
\def\ti{\textit}
\def\N{\mathbb{N}} 
\def\R{\mathbb{R}} 
\def\C{\mathbb{C}} 
\def\P{\mathbb{P}} 
\def\E{\mathbb{E}} 
\def\Var{\operatorname{Var}}
\def\1{\mathbb{1}}
\newcommand*\Laplace{\mathop{}\!\mathbin\bigtriangleup}
\newcommand*\DAlambert{\mathop{}\!\mathbin\Box}
\title{Propagation of Singularities for the Damped Stochastic Klein-Gordon Equation}
\author{Hongyi Chen \footnote{Department of Mathematics, Aarhus University, Aarhus, Denmark. Email: hchen77@math.au.dk} \and Cheuk Yin Lee\footnote{School of Science and Engineering, The Chinese University of Hong Kong (Shenzhen), Longgang, Shenzhen, Guangdong, 518172, P.R. China. Email: leecheukyin@cuhk.edu.cn}}
\begin{document}

\maketitle

\begin{abstract}
    \noindent For the $1+1$ dimensional damped stochastic Klein-Gordon equation driven by additive space-time white noise, we show that random singularities associated with the law of the iterated logarithm exist and propagate in the same way as the stochastic wave equation without damping. 
    The proof can be reduced to the critically damped case, though many details of the proof still differ from the wave equation case.
    Our result provides evidence for possible connections to microlocal analysis, i.e., the exact regularity and singularities described in this paper should admit wavefront set type descriptions whose propagation is determined by the highest order terms of the linear operator.\\
\end{abstract}

\noindent
2020 Mathematics Subject Classification: 60H15, 60G17.\\
Keywords: stochastic partial differential equation, Klein-Gordon equation, singularity, law of the iterated logarithm


\section{Introduction}

We study the regularity of the one-dimensional damped stochastic Klein-Gordon equation \begin{equation}\label{eq:DSKG}
    (\DAlambert+a\partial_t+m^2 )u(t,x)=\dot
    W(t,x), \quad t>0, x \in \R,
\end{equation} 
interpreted in the sense of \cite{Walsh86} subject to initial conditions $u(0,x)=\partial_tu(0,x)=0$.
Here $a,m\in \R$ are known from the physics literature as the damping constant and mass, $\DAlambert=\partial_{tt}-\partial_{xx}$ is the d'Alembertian in one space dimension, and $\dot W$ is Gaussian space-time white noise defined on a complete probability space $(\Omega, \mathscr{F}, \P)$.
Let $\{\mathscr{F}_t\}_{t\ge 0}$ denote the filtration generated by the noise $\dot{W}$. For readers interested in the physical motivation for the Klein-Gordon equation as well as the addition of the damping term and the noisy forcing term, we refer to \cite{brzezniak2005stochastic,song2020stochastic} and the references therein.
Our main results are the following Theorems.
\begin{theorem}[Law of the iterated logarithm]\label{th:lil}
    For any fixed $t_0\ge0$ and $x_0 \in \R$, there exists a constant $0<K_1<\infty$ depending on $(t_0, x_0)$ such that
    \begin{align}\label{E:lil}
        \limsup_{h \to 0^+} \frac{|u(t_0+h,x_0+ h)-u(t_0,x_0)|}{\sqrt{h \log\log(1/h)}} = K_1 \quad \text{a.s.}
    \end{align}
\end{theorem}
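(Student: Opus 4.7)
The plan is to first reduce to the critically damped case $m^2=a^2/4$ by showing that the difference between the general solution and the critically damped one is $C^1$ in the characteristic direction, and then to analyze the critically damped equation by transforming it into a stochastic wave equation via $v=e^{at/2}u$. After this reduction and transformation, the characteristic increment of interest will split into a principal Gaussian process that is---after a deterministic linear time change---a standard Brownian motion, plus a quadratically small remainder; the standard LIL for Brownian motion at the origin then delivers the conclusion.

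For the reduction, let $u_c$ denote the solution of \eqref{eq:DSKG} with the same white noise and initial data but with $m$ replaced by $|a|/2$. Then $w:=u-u_c$ satisfies $(\DAlambert+a\partial_t+a^2/4)w=(a^2/4-m^2)u$ with zero initial data. Assuming a.s.\ continuity of $u$ (presumably established earlier in the paper), the right-hand side is a continuous random field. The critically damped Green's function is $G_c(t,x)=\tfrac12 e^{-at/2}\1_{\{|x|<t\}}$, and a direct differentiation of Duhamel's formula will give
\[
(\partial_t+\partial_x)w(t,x)=-\tfrac{a}{2}w(t,x)+(a^2/4-m^2)\int_0^t e^{-a(t-s)/2}\,u(s,x+(t-s))\,ds,
\]
which is continuous in $(t,x)$. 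Hence $h\mapsto w(t_0+h,x_0+h)$ is a.s.\ $C^1$, so $w(t_0+h,x_0+h)-w(t_0,x_0)=O(h)=o(\sqrt{h\log\log(1/h)})$ a.s., and it is enough to prove the theorem for $u_c$.

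For the critically damped case, the substitution $v:=e^{at/2}u_c$ yields $\DAlambert v=e^{at/2}\dot W$, whose Walsh solution is $v(t,x)=\tfrac12\int_0^t\int_{x-(t-s)}^{x+(t-s)} e^{as/2}W(ds,dy)$. Since $u_c=e^{-at/2}v$ and $e^{-a(t_0+h)/2}-e^{-at_0/2}=O(h)$, the LIL for $u_c$ along $(1,1)$ reduces---up to the multiplicative factor $e^{-at_0/2}$ and an $O(h)$ error---to an LIL for the characteristic increment of $v$. Comparing the two backward light cones decomposes this increment as $I_1(h)+I_2(h)$, where
\[
I_1(h):=\frac12\int_0^{t_0}\!\int_{x_0+t_0-s}^{x_0+2h+t_0-s} e^{as/2}W(ds,dy)
\]
is the integral over the strip along the forward characteristic through $(t_0,x_0)$ for $s\le t_0$, and $I_2(h)$ is the integral over the triangular cap with $s\in[t_0,t_0+h]$. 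The key observation is that for $h_1<h_2$ the integration region defining $I_1(h_2)-I_1(h_1)$ is disjoint from that defining $I_1(h_1)$, so $I_1$ is a mean-zero Gaussian process with independent increments and $\Var(I_1(h_2)-I_1(h_1))=(h_2-h_1)(e^{at_0}-1)/(2a)$. Thus $I_1$ is a time-changed standard Brownian motion, and the Brownian LIL at the origin gives $\limsup_{h\to 0^+}|I_1(h)|/\sqrt{h\log\log(1/h)}=\sqrt{(e^{at_0}-1)/a}$ a.s. A direct bound yields $\Var(I_2(h))=O(h^2)$; combined with Borel--Cantelli along a geometric subsequence $h_n=\theta^n$ and a monotonicity argument in the cap, this gives $I_2(h)=o(\sqrt{h\log\log(1/h)})$ a.s. Multiplying by $e^{-at_0/2}$ produces \eqref{E:lil} with $K_1=\sqrt{(1-e^{-at_0})/a}$, interpreted as $\sqrt{t_0}$ when $a=0$.

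The main obstacle I expect is the reduction step: one must verify that applying the critically damped Green's function to a merely continuous random field genuinely produces an output with a continuous characteristic derivative, despite the jump of $G_c$ along the light cone. The cancellation exhibited above makes this transparent, but it depends on a.s.\ continuity of $u$, which must be established separately (for instance by a Kolmogorov argument on the variance of $u(t,x)-u(t',x')$). A secondary technical point is the upgrade of the $h_n=\theta^n$ Borel--Cantelli bound on $I_2$ to a bound valid for all $h$, though here the very small variance of $I_2$ makes this routine.
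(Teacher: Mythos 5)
Your reduction to the critically damped case is essentially the same as the paper's (Section 3): you both write the general solution as the critical solution plus a remainder that is Lipschitz along the characteristic direction. You execute it by differentiating Duhamel's formula along $(\partial_t+\partial_x)$, while the paper bounds the increment of $u_L$ directly by $Ch$ via an explicit decomposition of the light-cone regions (Lemma \ref{lem:u_L}); both arguments depend on the a.s.\ continuity of $u$ from Proposition \ref{lem:u-u} and are equivalent in substance.

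Where you genuinely diverge from the paper is in the critical-damping analysis itself. The paper transforms $u$ to the one-parameter process $\tilde u(z)$, establishes the two-sided second-moment comparison via the Fourier-analytic bounds of Lemma \ref{lemma: Polarity 8.3} together with the strong local nondeterminism of Lemma \ref{lem:SLND}, and then invokes the general LIL machinery of \cite[Theorem 5.2]{LX23} as a black box. You instead exploit the explicit kernel $\Gamma(t,x)=\tfrac12 e^{-at/2}\1_{\{|x|<t\}}$: after the gauge change $v=e^{at/2}u_C$, the characteristic increment splits into an independent-increments strip integral $I_1(h)$ (an exact time-changed Brownian motion, since the strips are disjoint in $h$) and a cap integral $I_2(h)$ of variance $O(h^2)$, after which the classical Brownian LIL applies. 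The nested-cap structure makes $I_2$ a continuous martingale in $h$, so the upgrade from the geometric subsequence to all $h$ is indeed routine via Doob's maximal inequality, even though ``monotonicity'' is not quite the right word for what you are using. Your route is more elementary, self-contained, and yields the explicit constant $K_1=\sqrt{(1-e^{-at_0})/a}$ (degenerating correctly to $\sqrt{t_0}$ as $a\to0$), which the paper's appeal to \cite{LX23} does not produce. This is, in fact, very much in the spirit of what the paper itself does in the proof of Theorem \ref{th:sing}, where $u_1$ is decomposed into a continuous $X$-part and a time-changed Brownian motion $Y$. Interestingly, both your argument and the paper's implicitly require $t_0>0$ (your $I_1$ vanishes and the paper's SLND lower bound degenerates when $w_0=z_0=0$), so the theorem statement's ``$t_0\ge 0$'' with $K_1>0$ is a shared corner case, not a gap in your proposal specifically.
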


\begin{theorem}[Modulus of continuity]\label{th:mc}
    Fix $w_0\ge0$ and $0<a_1<a_2$, and consider the line segment
    \begin{align}\label{line:I}
        I = \left\{ (t,x) \in \R_+ \times \R : \tfrac{t- x}{\sqrt{2}} = w_0, \tfrac{t+ x}{\sqrt{2}} \in [a_1,a_2] \right\}.
    \end{align}
    Then, there exist constants $0<C_1<C_2<\infty$ depending on $(w_0,a_1,a_2)$ such that for every sub-interval $J$ of $I$, there exists a constant $K_J \in [C_1, C_2]$ such that
    \begin{align}
     \label{E:mc}   \limsup_{h\to0^+}\sup_{(t,x)\in J}\frac{|u(t+h,x+ h)-u(t,x)|}{\sqrt{h \log(1/h)}} = K_J \quad \text{a.s.}
    \end{align}
\end{theorem}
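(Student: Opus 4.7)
The plan is to pass to characteristic coordinates $v = (t+x)/\sqrt{2}$, $w = (t-x)/\sqrt{2}$: under this change the segment $I$ becomes the horizontal segment $\{(v, w_0) : v \in [a_1, a_2]\}$, and the characteristic increment $u(t+h, x+h) - u(t, x)$ becomes the pure $v$-increment $u(v + h\sqrt{2}, w_0) - u(v, w_0)$. Theorem \ref{th:mc} then becomes a statement about the modulus of continuity of the one-parameter Gaussian process $v \mapsto u(v, w_0)$ on $[a_1, a_2]$, for which the standard Gaussian-process machinery applies.

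Following the paper's announced reduction, I would work in the critically damped case $m^2 = a^2/4$, where the fundamental solution collapses to $G(t,x) = \tfrac{1}{2} e^{-at/2} \mathbb{1}_{|x|<t}$, and the Walsh solution admits the clean characteristic representation
\[
    u(v,w) = \tfrac{1}{2} e^{-a(v+w)/(2\sqrt{2})} \int\!\!\int_{D(v,w)} e^{a(\alpha+\beta)/(2\sqrt{2})}\, W(d\alpha, d\beta),
\]
where $D(v,w) = \{(\alpha, \beta) : \alpha \le v,\ \beta \le w,\ \alpha+\beta \ge 0\}$. The key analytic step is a local variance asymptotic of the form $\mathbb{E}[(u(t+h, x+h) - u(t, x))^2] = \rho(t,x)\, h + O(h^2)$ with $\rho$ a smooth positive function of the base point. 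Expressing the $v$-increment at fixed $w_0$ as a smooth prefactor-difference term (of order $h$) plus a white-noise integral over the thin strip $\{v < \alpha \le v + h\sqrt{2}\} \cap D(v+h\sqrt{2}, w_0)$ should make this asymptotic transparent, with $\rho(t, x)$ determined by the integrated square of the exponential weight over the strip.

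For the upper bound in \eqref{E:mc}, I would apply Dudley's entropy integral (or the Garsia-Rodemich-Rumsey inequality) to the process $v \mapsto u(v, w_0)$ on $J$: the uniform bound $\sigma^2(t, x; h) \le C h$ on $J$ yields metric entropy $\log N(\epsilon) = O(\log(1/\epsilon))$ and the required $O(\sqrt{h \log(1/h)})$ control on the supremum of increments, producing the constant $C_2$. For the sharp lower bound, I would exploit the white-noise structure by slicing the thin strip along the $\beta$-direction into disjoint pieces; this produces approximately independent Gaussian increments whose joint law can be compared to a sequence of i.i.d.\ Gaussians of variance $\rho(t,x)\, h$. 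A Borel-Cantelli argument along a geometric sequence $h_n = q^n$, together with a dyadic chaining over base points, then yields an a.s.\ lower bound matching the upper bound, and identifies $K_J = \sup_{(t,x) \in J}\sqrt{2\, \rho(t,x)}$.

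The main obstacle is the sharp lower bound: the damping factor $e^{-a(v+w)/(2\sqrt{2})}$ makes the process non-stationary in $v$, so matching the constant requires careful local freezing of $\rho$ on small $v$-sub-intervals and aggregation of error terms uniformly in the base point $(t, x) \in J$. Once the modulus of continuity is established in the critically damped case, the general damped Klein-Gordon equation follows from the reduction already established earlier in the paper.
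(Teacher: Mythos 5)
You take a genuinely different route from the paper. The paper's actual proof does not argue the modulus of continuity from scratch: it passes to the one-parameter process $\tilde u(z) = u\big(\tfrac{w_0+z}{\sqrt 2}, \tfrac{-w_0+z}{\sqrt 2}\big)$ (the same coordinate change you propose), and then verifies the hypotheses of the abstract exact-modulus Theorem 6.1 in [LX23]. Those hypotheses are: the variance upper bound of Proposition \ref{lem:u-u}, a spectral decomposition estimate furnished by Lemma \ref{lemma: Polarity 8.3}, and --- this is the crucial replacement --- the one-sided strong local nondeterminism of Lemma \ref{lem:SLND}, i.e.\ the conditional variance bound $\Var(\tilde u(z)\mid\tilde u(z_1),\ldots,\tilde u(z_n)) \ge C\min_i(z-z_i)$. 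The SLND is proved via the exact representation $\Gamma(t,x)=\tfrac12 e^{-at/2}\1_{\{|x|<t\}}$ and the disjointness of the noise strips, which is the same geometric observation underlying your ``slicing'' idea; but the paper packages it as a conditional-variance lower bound so that the entire machinery of [LX23] (sharp chaining, small-ball estimates, the zero--one law making the $\limsup$ a.s.\ constant, and the derivation of the final two-sided bound $\sqrt{4c_2}\le K_J\le\sqrt{2}c_1$) can be applied wholesale. Your route is more self-contained and closer to the classical proof of L\'evy's theorem, which is pedagogically attractive.

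That said, there are real gaps in the sketch. First, a Dudley/GRR upper bound only gives the correct order $O(\sqrt{h\log(1/h)})$, not a sharp constant, so as written your upper and lower bounds would not meet, and you would not obtain an a.s.\ constant $K_J$; getting the sharp upper constant needs a Borell--TIS concentration argument on top of the entropy bound, and getting the limsup to be a.s.\ constant (rather than merely lying between two bounds) requires a zero--one law argument which you do not mention. Second, your claimed identification $K_J = \sup_{(t,x)\in J}\sqrt{2\rho(t,x)}$ is a stronger claim than the theorem makes --- the paper only asserts $K_J\in[C_1,C_2]$ --- and it is not justified by the slicing + Borel--Cantelli heuristic, because the increments over adjacent thin strips are correlated through the smooth exponential prefactor, precisely the nonstationarity you flag as the ``main obstacle.'' You would need the freezing argument you allude to worked out quantitatively, and at that point you are essentially reconstructing the SLND-based small-ball estimates that [LX23] supplies. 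So the decomposition is correct, the geometric input is the same, but the hard analytic core of the sharp constant is under-addressed; the paper sidesteps it by invoking the external theorem, and a self-contained version would need Lemma \ref{lem:SLND} (or its equivalent in your notation) stated and used explicitly.
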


Theorem \ref{th:lil} states that at a fixed space-time point, the increments (along a characteristic direction) are locally of order $\sqrt{h\log\log(1/h)}$, while Theorem \ref{th:mc} asserts that the uniform modulus of continuity for the increments are of a larger order, at a logarithmic level.
Therefore, Theorems \ref{th:lil} and \ref{th:mc} together suggest that there exist random space-time points at which local increments are larger than those at a fixed point.
Theorem \ref{th:sing} below justifies the existence of these random singularities and shows that they propagate along the other characteristic direction.

One of the ingredients for proving Theorem \ref{th:sing} is the following simultaneous upper bound for the law of the iterated logarithm.
For notational convenience, we introduce a change of coordinates:
\begin{align}\label{w,z}
    (w,z) = \left(\tfrac{t-x}{\sqrt{2}}, \tfrac{t+x}{\sqrt{2}}\right), \qquad (t,x) = \left(\tfrac{w+z}{\sqrt{2}}, \tfrac{-w+z}{\sqrt{2}}\right).
\end{align}
The $w$ and $z$ directions are the characteristic directions of the wave equation.

\begin{theorem}\label{th:sim:lil}
    Fix $N>0$ and $0<a_1<a_2$. Then, there exists a constant $0<K_2<\infty$ such that for all fixed $z_0 \in [a_1,a_2]$,
    \begin{align}\label{E:sim:lil}
        \P\left\{ \limsup_{h\to0^+} \frac{|u\big(\frac{w+z_0}{\sqrt{2}}+h, \frac{-w+z_0}{\sqrt{2}}+h\big)-u\big(\frac{w+z_0}{\sqrt{2}}, \frac{-w+z_0}{\sqrt{2}}\big)|}{\sqrt{h\log\log(1/h)}} \le K_2 \text{ for all $w \in [0,N]$} \right\} = 1.
    \end{align}
\end{theorem}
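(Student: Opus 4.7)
The plan is to combine Gaussian concentration with a Borel--Cantelli argument along a geometric sequence of scales. Introduce $V(w,z):=u\big(\tfrac{w+z}{\sqrt{2}},\tfrac{-w+z}{\sqrt{2}}\big)$ and, for $z_0\in[a_1,a_2]$ fixed, the centered Gaussian field $U_w(h):=V(w,z_0+\sqrt{2}h)-V(w,z_0)$, so that the quantity inside the $\limsup$ in \eqref{E:sim:lil} is $|U_w(h)|/\sqrt{h\log\log(1/h)}$. The idea is to control $\sup_{S_n}|U_w(h)|$ over each dyadic shell $S_n:=[0,N]\times[h_{n+1},h_n]$ with $h_n=2^{-n}$, and to deduce the theorem by Borel--Cantelli.

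The first ingredient I would establish is, uniformly in $w,w_i\in[0,N]$, $h,h_i\in(0,1]$, and $z_0\in[a_1,a_2]$, the Brownian-sheet-type variance estimates
\begin{equation*}
\E[U_w(h)^2]\le Ch,\qquad\E[(U_{w_1}(h)-U_{w_2}(h))^2]\le C|w_1-w_2|\,h,\qquad\E[(U_w(h_1)-U_w(h_2))^2]\le C|h_1-h_2|.
\end{equation*}
The substitution $u=e^{-at/2}v$ turns the critically damped ($a=2m$) operator into $\DAlambert$ applied to $v$, with the noise reweighted by the smooth bounded factor $e^{at/2}$; written in characteristic coordinates $v$ is then, up to that weight, a stochastic integral of white noise over a backward light cone (a triangle in the $(w,z)$-plane), and the three estimates above follow from elementary area computations, using that the $z$-increment in $U_w(h)$ integrates over a strip of $w$-length $w+z_0$ and $z$-height $\sqrt{2}h$. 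The authors' reduction from general to critical damping should absorb the remaining difference into a finite multiplicative constant.

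Given these estimates, the induced $L^2$-pseudometric $d$ on $S_n$ satisfies $d((w,h),(w',h'))\lesssim\sqrt{|w-w'|\,h_n+|h-h'|}$, the $d$-diameter of $S_n$ is $\lesssim\sqrt{h_n}$, and the substitution $\varepsilon=\sqrt{h_n}\,\eta$ in the Dudley entropy integral $\int_0^{C\sqrt{h_n}}\!\sqrt{\log N(S_n,d,\varepsilon)}\,d\varepsilon$ shows it is of order $\sqrt{h_n}$ with \emph{no} logarithmic factor in $h_n$, because the $h_n$ inside $d$ exactly compensates the bounded $w$-length $N$. Hence $\mu_n:=\E\sup_{S_n}|U_w(h)|\le C_1\sqrt{h_n}$ and $\sigma_n^{\ast}:=\sup_{S_n}\sqrt{\Var U_w(h)}\le C_2\sqrt{h_n}$, and the Borell--TIS inequality gives, for all $n$ large,
\begin{equation*}
\P\Big(\sup_{S_n}|U_w(h)|>K_2\sqrt{h_n\log\log(1/h_n)}\Big)\le 2\exp\!\Big(-\tfrac{(K_2\sqrt{\log\log(1/h_n)}-C_1)^2}{2C_2^2}\Big)\le C_3\,n^{-K_2^2/(3C_2^2)},
\end{equation*}
which is summable in $n$ for $K_2$ large enough in terms of $N,a_2,a,m$. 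Borel--Cantelli, together with the routine comparison $h_n\le 2h$ for $h\in[h_{n+1},h_n]$, then yields \eqref{E:sim:lil}.

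The main obstacle is the first ingredient: the Green's function of $\DAlambert+a\partial_t+m^2$ is an exponentially weighted Bessel function, and preserving the clean Brownian-sheet scaling $|w_1-w_2|\,h$ in the middle variance estimate (as opposed to a weaker bound of the form $|w_1-w_2|$) is precisely what keeps the Dudley integral logarithm-free and so produces the correct LIL scale $\sqrt{h\log\log(1/h)}$ in the end. The critical-damping reduction is essential here because it replaces the Bessel kernel by the indicator of the light cone, at which point the three variance estimates become transparent.
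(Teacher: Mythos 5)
Your plan coincides essentially with the paper's own proof: the dyadic shells $[0,N]\times[2^{-n-1},2^{-n}]$, the pointwise bound $\E[U_w(h)^2]\lesssim h$, the $h$-increment bound $\lesssim|h-h'|$, the rectangular-increment bound $\lesssim|w_1-w_2|h$ (the paper's Lemma \ref{lem:rec_incr}), the resulting $L^2$-metric $\lesssim\sqrt{|w-w'|\,h_n+|h-h'|}$ whose Dudley integral is of order $\sqrt{h_n}$ with no logarithm, Borell's inequality, and Borel--Cantelli all appear in the same form. The one imprecision is that the rectangular-increment estimate for $u=e^{-at/2}v$ is not a pure area computation on $v$ -- the cross terms between differences of the exponential prefactor and increments of $v$ must be controlled separately (by a Taylor expansion of $e^{-at/2}$, exactly as in the paper's proof of Lemma \ref{lem:rec_incr}), and the general-to-critical reduction subtracts an additive Lipschitz term $u_L$ rather than absorbing a multiplicative constant -- but both of these points work out and do not change the substance of your argument.
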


\begin{theorem}[Existence and propagation of singularities]\label{th:sing}
    Fix $w_0\ge0$, $0<z_0<z_0'$. Let $t_0=w_0/\sqrt{2}$. Then:
    \begin{enumerate}
        \item[(i)] There exists an $\mathscr{F}_{t_0}$-measurable random variable $Z=Z(\omega)$ such that $z_0 \le Z \le z_0'$ a.s. and
    \begin{align}
        \limsup_{h\to0^+} \frac{|u\big(\frac{w_0+Z}{\sqrt{2}}+h,\frac{-w_0+Z}{\sqrt{2}}+h\big)-u\big(\frac{w_0+Z}{\sqrt{2}},\frac{-w_0+Z}{\sqrt{2}}\big)|}{\sqrt{h \log\log(1/h)}} = +\infty \quad \text{a.s.}
    \end{align}
        \item[(ii)] If $Z$ is any such random variable, then
        \begin{align}\label{E:propagation}
            \P\left\{\limsup_{h\to0^+} \frac{|u\big(\frac{w+Z}{\sqrt{2}}+h,\frac{-w+Z}{\sqrt{2}}+h\big)-u\big(\frac{w+Z}{\sqrt{2}},\frac{-w+Z}{\sqrt{2}}\big)|}{\sqrt{h \log\log(1/h)}} = +\infty \text{ for all $w>w_0$}\right\}=1.
        \end{align}
    \end{enumerate}
\end{theorem}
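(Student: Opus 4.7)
The plan is to decompose the noise $\dot W = \dot W^{\le} + \dot W^{>}$ into its restrictions to $[0,t_0]\times\R$ and $(t_0,\infty)\times\R$, yielding $u = u^{\le} + u^{>}$ with $u^{\le}\in\mathscr{F}_{t_0}$ and $u^{>}$ independent of $\mathscr{F}_{t_0}$. After reducing to the critically damped case $a=2m$, where $G(t-s,x-y)=\tfrac12 e^{-a(t-s)/2}\mathbb{1}_{\{|x-y|\le t-s\}}$, the random singularity $Z$ will be constructed as a \emph{fast point} of the Gaussian process $z\mapsto u^{\le}\big(\tfrac{w_0+z}{\sqrt{2}},\tfrac{-w_0+z}{\sqrt{2}}\big)$, automatically $\mathscr{F}_{t_0}$-measurable. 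The ordinary LIL rate of $u^{>}$ is of lower order and cannot destroy the singularity. For Part (ii), a light-cone geometry argument will show that the $z$-increment of $u^{\le}$ at $(w,Z)$ for $w\ge w_0$ is a smooth multiple of the corresponding increment at $(w_0,Z)$, whence the fast-point property propagates automatically.

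\textbf{Part (i).} Writing $D(w,z)=\{(w',z'):0\le w'+z'\le w_0,\,w'\le w,\,z'\le z\}$ in rotated coordinates, one has
\[
u^{\le}\big(\tfrac{w+z}{\sqrt{2}},\tfrac{-w+z}{\sqrt{2}}\big)=\tfrac12\int_{D(w,z)}e^{-a(w+z-w'-z')/(2\sqrt{2})}\,\tilde W(dw'\,dz'),
\]
with $\tilde W$ the rotated space-time white noise. A direct variance computation shows the $z$-increment of this process over $[z,z+h]$ at $w=w_0$ is Gaussian with variance of order $h$, Brownian-like. An Orey--Taylor-type argument (dyadic Borel--Cantelli with standard Gaussian tail estimates) then produces a constant $\alpha>0$ such that the set
\[
F_\alpha:=\Big\{z\in[z_0,z_0']:\limsup_{h\to 0^+}\tfrac{|u^{\le}(\tfrac{w_0+z+h}{\sqrt{2}},\tfrac{-w_0+z+h}{\sqrt{2}})-u^{\le}(\tfrac{w_0+z}{\sqrt{2}},\tfrac{-w_0+z}{\sqrt{2}})|}{\sqrt{h\log(1/h)}}\ge\alpha\Big\}
\]
is almost surely nonempty. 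Setting $Z:=\inf F_\alpha$ yields an $\mathscr{F}_{t_0}$-measurable random variable in $[z_0,z_0']$. Since $u^{>}$ is independent of $\mathscr{F}_{t_0}$, Theorem \ref{th:sim:lil} applied to $u^{>}$ (together with Fubini, given that $Z\in\mathscr{F}_{t_0}$) implies the $u^{>}$-increment at $Z$ satisfies $\limsup/\sqrt{h\log\log(1/h)}\le K$ almost surely. The triangle inequality and $\sqrt{h\log(1/h)}/\sqrt{h\log\log(1/h)}\to\infty$ now complete Part (i).

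\textbf{Part (ii).} Given any $\mathscr{F}_{t_0}$-measurable $Z\in[z_0,z_0']$ with the singularity property at $(w_0,Z)$, the decomposition of Part (i) forces the $u^{\le}$-increment at $(w_0,Z)$ to have divergent $\limsup/\sqrt{h\log\log(1/h)}$. For $w>w_0$, one verifies that $D(w,z)\setminus D(w_0,z)\subset\{(w',z'):z'\le w_0-w'\le 0\}$ when $z>0$, so the ``extra'' integration region acquired by moving from $w_0$ to $w$ lies below $z'=0$ and does not intersect the $z$-increment strip $\{z<z'\le z+h\}$. A direct computation then gives
\[
[u^{\le}\text{-increment at }(w,z)]=e^{-a(w-w_0)/(2\sqrt{2})}\,[u^{\le}\text{-increment at }(w_0,z)]+O(h),
\]
where the $O(h)$ correction comes from the slow variation of the damping weight over the overlap of the two past light cones. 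Hence the fast-point property transfers from $(w_0,Z)$ to $(w,Z)$ for every $w\ge w_0$. Theorem \ref{th:sim:lil} applied to $u^{>}$ provides the uniform-in-$w$ upper bound on the $u^{>}$-contribution; together these yield \eqref{E:propagation} for $w\in(w_0,w_0+N]$ almost surely, and a countable union over $N\in\N$ completes the proof.

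\textbf{Main obstacle.} The crux is the fast-point construction for $u^{\le}(w_0,\cdot)$ on $[z_0,z_0']$: although the covariance is Brownian-like on small scales, the process is non-stationary, and the degeneration of $D(w_0,z)$ as $z\to 0^+$ together with the boundary behavior along $w'+z'=w_0$ requires careful small-ball and tail estimates to adapt Orey--Taylor. A secondary technical point is verifying the required simultaneous LIL upper bound for $u^{>}$, which should follow by a routine adaptation of the proof of Theorem \ref{th:sim:lil} to the noise-restricted solution.
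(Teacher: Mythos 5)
Your overall strategy matches the paper's: decompose $u=u^{\le}+u^{>}$ at time $t_0$, build the singular random point $Z$ from the $\mathscr F_{t_0}$-measurable part, control $u^{>}$ via Theorem~\ref{th:sim:lil} and conditioning, and exploit the light-cone geometry (the $z$-increment strip does not change as $w$ increases) for propagation. The one-sided comparison, the Fubini/conditioning step, and the countable exhaustion over $N$ are all as in the paper.

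However, the obstacle you flag at the end — constructing a fast point for the \emph{non-stationary} process $z\mapsto u^{\le}\big(\tfrac{w_0+z}{\sqrt2},\tfrac{-w_0+z}{\sqrt2}\big)$ — is exactly the gap, and the paper closes it with an algebraic step you do not carry out. Using $\Gamma(t,x)=\tfrac12 e^{-at/2}\mathbb 1_{\{|x|<t\}}$ and working out the characteristic increment of $u^{\le}$ explicitly, the paper shows
\begin{align*}
u^{\le}(t+h,x+h)-u^{\le}(t,x)
= \frac{e^{-\frac{a(w+z)}{2\sqrt2}}(e^{-ah/2}-1)}{2}\,X(w,z)
+ \frac{e^{-\frac{a(w+z)}{2\sqrt2}}e^{-ah/2}}{2}\bigl(Y(z+\sqrt2 h)-Y(z)\bigr),
\end{align*}
where $X(w,z)$ is an a.s.\ bounded continuous field and, crucially, $Y(z)=\int_0^{t_0}\int_{-s}^{\sqrt2 z-s}e^{as/2}W(ds,dy)$ is (up to a deterministic constant) a genuine standard Brownian motion that does not depend on $w$ at all. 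Since $|e^{-ah/2}-1|\lesssim h$, the $X$-term contributes $O(h)$ and vanishes under the $\sqrt{h\log\log(1/h)}$ normalization; the singularity lives entirely in the $Y$-term. This simultaneously (a) removes your non-stationarity concern, because L\'evy's modulus of continuity for Brownian motion applies to $Y$ directly and the nested-intervals/Meyer-section-theorem construction of $Z$ from \cite{LeeXiao22} can be imported verbatim, and (b) makes Part (ii) essentially automatic, since the $Y$-increment at $(w,Z)$ is literally the same random variable as at $(w_0,Z)$ multiplied by the nonvanishing deterministic constant $e^{-a(w-w_0)/(2\sqrt2)}$. Your ``direct computation'' formula in Part~(ii) is correct, but verifying it amounts to doing this $X/Y$ decomposition anyway, so you should make it explicit; without it, the fast-point construction is where the proof would stall. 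One further technical point you gloss over: $Z:=\inf F_\alpha$ needs to be shown $\mathscr F_{t_0}$-measurable, which is why the paper invokes Meyer's section theorem rather than taking an infimum of a raw limsup set.
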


\begin{remark}
    In general one can define $\DAlambert=b^2\partial_{tt}-c^2\partial_{xx}$, but a simple change of coordinates would allow one to produce the above results (with appropriately tilted characteristic lines) with the same calculations. In particular, what one would see is that singularities where the increments are observed in one characteristic direction would be propagated forward in time in the other characteristic direction. If one studies the proof of Theorem \ref{th:sim:lil} and the propagation part of \ref{th:sing}, one will discover that the proofs would fail if one tries to repeat it for any other pair of unit vectors that are not the characteristic pair. This is also predictable from the known behavior of hyperbolic equations, and we will discuss more of its relevance in the literature review. We also note that all of our results can apply to the excited stochastic Klein-Gordon equation $(a<0)$, but with extra dependence on $T$ the time horizon in all of the constants in the estimates.
\end{remark}

\subsection{Literature on LIL and L\'evy's Modulus of Continuity}

The (Khinchin) law of the iterated logarithm (LIL, for short) was first stated by Khinchin in 1924 \cite{khintchine1924satz} as a description of the fluctuations of a symmetric random walk. Since Brownian motion is the limit of scaled random walks, the LIL for Brownian motion \cite{khintchine1933asymptotische} can be interpreted as both regularity and large time fluctuation statements. We will concentrate on the regularity interpretation, for which we note that it has since been generalized to stochastic differential equations and Brownian sheets \cite{Zim,Orey-Pruitt}. Importantly, local versions of the LIL with a different power of $h$ in the denominator of \eqref{E:lil} has been proven for fractional Brownian Motion and fractional Brownian sheets \cite{Orey72,Ayache-Xiao}, which can be seen as a reflection of their regularity being higher or lower than Brownian motion. All of these have also been generalized to Gaussian processes, Gaussian random fields, and stochastic partial differential equations (SPDEs) driven by appropriate objects \cite{Marcus-Rosen,MWX,LeeXiao22,Hu-Lee}.

Another regularity property of Brownian motions, L\'evy's modulus of continuity (MC) \cite{Levy}, is apparently at odds with the LIL since it divides by a larger denominator compared to the statement of LIL. This has also been generalized to Gaussian processes and SPDEs \cite{Marcus-Rosen,MWX,LX19,LX23,Hu-Lee}. Notably, another way the LIL and MC are different is that the LIL fixes a point in parameter space while MC allows variation of the base point in parameter space. This indicates the existence of random points at which local increments are exceptionally large and fail to satisfy LIL. We will call such points LIL-singularities. 
Exceptional increments were first studied for the Brownian motion by Orey and Taylor \cite{Orey-Taylor} who showed that the set of LIL-singularities is dense, uncountable, and has Hausdorff dimension 1, and also studied different types of singularities.
Similar results are known for Brownian sheets, general Gaussian processes, and SPDEs \cite{Walsh82,Walsh86,Kh-Shi,KPX,Huang-Kh}.
In particular, Walsh \cite{Walsh82} studied the LIL-singularities of a 2-parameter Brownian sheet and proved that singularities along an axis direction propagate perpendicularly along the other axis direction.
Walsh modified his argument to show that LIL-singularities of stochastic wave equation propagate along characteristic directions \cite{Walsh86}.
These results have been extended to semi-fractional Brownian sheets \cite{Blath-Martin} and more general stochastic wave equations \cite{CN88, LeeXiao22}.

\subsection{Microlocal Interpretation and Heuristics}
Theorem \ref{th:sim:lil} and the propagation part of Theorem \ref{th:sing} admit natural interpretations in the language of microlocal analysis\cite{sato1970regularity,hormander1971fourier,hormander1973existence,vasy2011microlocalanalysisasymptoticallyhyperbolic,hintz2025introduction}. For the differential operator $P=\DAlambert+a\partial_t+m^2$, its principal symbol (a function on $(\R_t\times\R_x)\times (\R^2\setminus\set{0})$, interpreted as the nonzero cotangent space of $\R_t\times\R_x$) is \[p(t,x;\sigma,\xi)=-\sigma^2+\xi^2,\] which corresponds only to the leading order term $\DAlambert$. The \textit{Hamiltonian vector field} of $p$ is \[H_p=-2\sigma\partial_t+2\xi\partial_x.\]
The integral curves of $H_p$ projected into physical spacetime are exactly the characteristic lines, which regularity (Theorem \ref{th:sim:lil}) and singularity (Theorem \ref{th:sing} part (ii)) propagate along.

The principle that our results seek to explore is that propagation of singularities is determined entirely by the highest order terms. In microlocal analysis, this is made precise using the language of wavefront sets in a seminal result of H\"ormander \cite{Hormander78}. Informally, a point $(x,\xi)\in \R^{n}\times(\R^{n}\setminus \{0\})$ (viewed as an element the nonzero cotangent bundle of $\R^n$) is an element of the (smooth) wavefront set of some Schwartz distribution $\phi$ if $\phi$ is not smooth in the direction $\xi$ at $x$. For a real hyperbolic operator $L$ an equation of the form $Lu=\phi$, \cite{Hormander78} tells us that the wavefront set of $u$ can be obtained by propagating the wavefront set of $\phi$ along the null-bicharacteristics generated by the Hamiltonian flow of $L$, which are determined entirely by the principal symbol. By symmetry, a corresponding result applies to the complement of the wavefront set, showing that regularities propagate the same way. Definitions of wavefront sets of Sobolev and Besov type (including for negative regularities), as well as proofs of the analogue of \cite{Hormander78} for these definitions, have been discovered since then \cite{sjostrand1982singularites,dencker1982propagation,BesovWS} (see \cite[Chapters 6--8]{hintz2025introduction} for a modern exposition on the precise definition of wavefront sets, including Sobolev wavefront sets, and the result of \cite{Hormander78}). Due to the diffeomorphism invariance of wavefront sets, this theory also extends to smooth manifolds, which has been useful in mathematical general relativity \cite{hintz2018global,hintz2021normally,hafner2021linear}. 

At present,  we do not have a wavefront set framework adapted to the singularities and regularities considered in this paper. Consequently, we lack the foundation on which H\"ormander’s propagation theorem could be formulated for our objects of study. Nevertheless, Theorems \ref{th:sim:lil} and \ref{th:sing} are consistent with the intuition it gives: the singularities (with directions) studied here propagate along the bicharacteristics of the Hamiltonian flow. This suggests that an appropriate stochastic analogue of the wavefront set, once developed, should also exhibit propagation governed by the principal symbol as described by \cite{Hormander78}. We further conjecture that one consequence of extending the microlocal framework to this setting is that the Edwards-Wilkinson equation \begin{equation}\label{eq: EW}
    (\partial_t-\Laplace)v(t,x)=\dot{W}(t,x)
\end{equation}
will not propagate its singularities, which are known to exist in $1+1$ dimensions \cite{Hu-Lee}. Instead, the well-known smoothing properties of the heat semigroup should instantaneously regularize all such singularities for positive times. Our main results provide heuristics for this in $1+1$ dimensions using the Smolchowski-Kramers approximation \begin{equation}
    (\mu^2\partial_{tt}+\partial_t-\partial_{xx})v_\mu(t,x)=\dot{W}(t,x),\quad \mu\in (0,1]
\end{equation}
where one will see \eqref{eq: EW} as $\mu\downarrow0$ (we refer interested readers to \cite{cerrai2006smoluchowski,cerrai2016smoluchowski,cerrai2023small} for the stability of this approximation and related topics). For $\mu>0$, one can easily adopt the contents of this paper to see that Theorems \ref{th:lil}--\ref{th:sing} all apply to $v_\mu$ with appropriately tilted characteristic lines. The effect of tilting the characteristic lines as $\mu$ gets smaller is that for any singularity of $v_\mu$ at some point $(t_0,x_0)$, the propagated singularity of part (ii) of Theorem \ref{th:sing} is at $(t_0+\varepsilon,x_0+\frac{\varepsilon}{\mu})$ at time $t_0+\varepsilon$, which notably goes to infinity as $\mu\downarrow0$. This is consistent with the PDE heuristic that hyperbolic operators have finite speed of propagation while parabolic operators have infinite speed of propagation, so $\mathcal{L}_\mu:=\mu^2\partial_{tt}+\partial_t-\partial_{xx}$ indexes a family of hyperbolic operators whose speed of propagation goes to infinity as $\mu\downarrow0$.

We now discuss the current wavefront set literature to make clear what is still needed to attack the above conjecture. The following definition is taken from \cite[Definition 23]{BesovWS}. Let $\mathcal{F}$ be the Fourier transform on $\R^d$, $B(0,1)$ be the unit ball on $\R^d$, and $u$ be a Schwartz distribution on $\R^d$. For $\alpha\in \R$, we say that $(x_0,\xi_0)\in \R^d_x\times(\R^d_\xi\setminus\set{0})$ is \ti{not in the $B^{\alpha}_{\infty,\infty}$  wavefront set} of $u$ if there exists an open $K\supset x_0$ and an open conic $\Gamma\supset \xi_0$, such that for all $\phi\in C_c^\infty(\R^d)$ with $\phi(x)\neq 0$ for all $x\in K$, $\kappa\in C_c^\infty(B(0,1))$ with $\partial^\beta\kappa(0)=0$ if $|\beta|<\lfloor\alpha\rfloor$, $\tilde{\kappa}\in C_c^\infty(A)$, where $A$ is an annulus in $\R^d$, we have \begin{align*}
    &\abs{\int_\Gamma \mathcal{F}[\phi u](\xi)\kappa(\xi)e^{ix\cdot \xi} d\xi}\lesssim 1,\quad \text{and}\\
    &\abs{\int_\Gamma \mathcal{F}[\phi u](\xi)\tilde{\kappa}(\lambda\xi)e^{ix\cdot \xi} d\xi}\lesssim \lambda^\alpha \quad \text{for all $\lambda\in (0,1]$}.
\end{align*}
The set of $(x_0,\xi_0)\in\R^d_x\times(\R^d_\xi\setminus\set{0})$ not satisfying the above description is the $B^{\alpha}_{\infty,\infty}$ wavefront set of $u$, which we denote as $\mathrm{WF}^\alpha(u)$. Proposition 25 of \cite{BesovWS} then tells us that $u\in B^{\alpha,\mathrm{loc}}_{\infty,\infty}\iff \mathrm{WF}^\alpha(u)=\emptyset$, where $B^{\alpha,\mathrm{loc}}_{\infty,\infty}$ is the local $\infty,\infty-$Besov space of order $\alpha$. We will not define this independently here, but the discussion following \cite[Definition 2.68]{BCD11} tells us that for $\alpha\in (0,1)$, it is equivalent to $C^{\alpha,\mathrm{loc}}(\R^d)$. Thus, classical notions of regularity admit natural descriptions using wavefront sets. We believe that with some extra work (figuring out how $\log(\lambda)$ properly appears in the right hand side of the second estimate), a corresponding definition can be formulated for the modulus of continuity. The classical L\'evy modulus of continuity for Brownian motion would imply that Brownian motion paths satisfy this definition in dimension 1 with probability 1. As stated, Theorem \ref{th:mc} would not imply the solution of \eqref{eq:DSKG} satisfies such a definition in dimension 2, since it only deals with characteristic increments, but we believe that the conclusion can be deduced from the machinery we used for its proof. To investigate propagation in the language of wavefront sets, one would need to have the LIL difference quotient at a base point with a given direction (or lack thereof) and have an equivalent formulation in terms of wavefront sets. What would be different about a corresponding definition for LIL regularity or singularity in comparison to the Besov-H\"older definition given above is that it is a \textit{strictly pointwise} phenomenon, meaning that information about ``uniform behavior in a neighborhood of the base point" should not be used for such a definition. 
Currently, to our knowledge, such a definition does not exist in the literature (except possibly in $d=1$, where one simply excludes the other direction, but nothing really interesting happens here as far as propagation is concerned).

\subsection{Paper organization, proof overview, and notation}
The rest of the paper will be organized as follows. In Section \ref{s:pre}, we state our Fourier transform convention, then prove pointwise and increment second moment bounds for our solution $u$, and hence establish the H\"older regularities of $u$. In Section \ref{s:reduction}, we show that proving any of our main theorems in the case of the critically damped equation actually implies the same theorem for all damped equations. The rest of the paper is then dedicated to proving the main theorems for the critically damped equations.
This reduction trick simplifies the proofs significantly. 
In Section \ref{s:4}, we prove Theorems \ref{th:lil} and \ref{th:mc}.
Finally, we prove Theorems \ref{th:sim:lil} and \ref{th:sing} in Sections \ref{s:5} and \ref{s:6}, respectively.

Here is an outline of the proof strategies for the main theorems.
The LIL and L\'evy-type modulus of continuity results (Theorems \ref{th:lil} and \ref{th:mc}) follow from standard Gaussian arguments. In particular, we prove these theorems using the framework developed in \cite{LX23}, applicable to a large class of Gaussian random fields with non-stationary increments.
In particular, these results rely on the variance bounds for the increments given by Lemmas \ref{lem: time increment bound} and \ref{lem: space increment bound}, and the harmonizable representation (see Lemma \ref{lemma: Polarity 8.3}). Moreover, the modulus of continuity result hinges on the strong local nondeterminism property established in Lemma \ref{lem:SLND}.

As in \cite{LeeXiao22}, the proof of the simultaneous LIL (Theorem \ref{th:sim:lil}) is based on a second moment estimate for the rectangular increments of the solution, but the computations are different from those in \cite{LeeXiao22} due to the presence of damping.
The proof of Theorem \ref{th:sing} follow a strategy that is similar to those in existing work on the undamped wave equation, but the details are significantly different due to the damping term. 
Without damping, the solution can be decomposed into two independent components $u_1+u_2$, where $u_1$ has Brownian increments and $u_2$ is a solution (in law) to the stochastic wave equation, hence the L\'evy modulus of continuity ensures that $u_1$ has a random singularity which persists for all future times along a characteristic direction due to the simultaneous LIL of $u_2$.
Our proof differs from the undamped wave equation case in the following way. Under critical damping, $u_2$ still solves the original equation, while the increments of $u_1$ can be expressed as damped Brownian increments plus a smaller-order term.
In finite time, the damping effect does not remove the Brownian singularity, nor does the smaller-order term, nor does $u_2$ which satisfies a simultaneous LIL, hence the singularity propagates for all future times.


We list some notation we will employ for the rest of the paper. \begin{itemize}
    \item Sets: $\R_+ = (0,\infty)$, $\N = \{0,1,2,\dots\}$, $\N_+ = \{1,2,\dots\}$
    \item $a\wedge b = \min\{a,b\}$; $\1_A$ denotes indicator function of a set $A$
    \item $f(x)\propto g(x)$ means that there exists $0<C<\infty$ such that $f(x) = Cg(x)$ for all $x$, and $f(x) \lesssim g(x)$ means that there exists $C<\infty$ such that $f(x) \le Cg(x)$ for all $x$. In one calculation, we may write these in succession without mentioning that the constants change from line to line, and we will only note dependence of the constants on certain parameters if we deem it relevant to do so.
\end{itemize}
\section{Preliminaries}\label{s:pre}
For notational clarity, we choose a Fourier transform convention and state some basic theorems in it.
\begin{definition}
    For $f\in L^2(\R)$, define its Fourier transform by \[\mathcal{F}[f](\xi)=\hat{f}(\xi):=\int_\R f(x)e^{-ix\xi}dx.\]
    The following are standard results.
    \begin{enumerate}
        \item Translation: $\mathcal{F}[f(\cdot-a)](\xi)=e^{-ia\xi}\hat{f}(\xi)$.
        \item Differentiation: $\mathcal{F}\left[\dv[k]{f}{x}\right]=(i\xi)^{k}\hat{f}(\xi).$
        \item Parseval-Plancherel identity: $\braket{f,g}_{L^2(\R)}=\frac{1}{2\pi}\braket{\hat{f},\hat{g}}_{L^2(\R)}$.
        \item Convolution: define $f*g(x):=\int_\R f(x-y)g(y) dy$. Then $\mathcal{F}[f*g](\xi)=\hat{f}(\xi)\hat{g}(\xi).$\\
    \end{enumerate}
\end{definition}

The solution to \eqref{eq:DSKG} is \begin{equation}\label{eq:mild sol}
    u(t,x)=\int_0^t \int_{-\infty}^{\infty} G(t-s,x-y) W(ds,dy)
\end{equation}
where $G$ denotes the fundamental solution to $\DAlambert+a\partial_t+m^2$ and the integral on the right hand side is understood in the sense of Walsh \cite{Walsh86}. 
The existence and regularity of $G$ is an elementary PDE exercise. Taking the Fourier transform in space, one finds that $\hat{G}(t,\xi)$, the Fourier transform of $G(t,\cdot)$, satisfies \begin{align}\begin{split}\label{Fourier G}
    \hat{G}(t,\xi)=&\,\frac{e^{-at/2}\sin\left(t\sqrt{\xi^2+m^2-\frac{a^2}{4}}\right)}{\sqrt{\xi^2+m^2-\frac{a^2}{4}}}\1_{\set{\xi^2>\frac
    {a^2}{4}-m^2}}+te^{-\frac{at}{2}}\1_{\set{\xi^2=\frac{a^2}{4}-m^2}}\\
    &+\frac{e^{-at/2}\sinh{\left(t\sqrt{\frac{a^2}{4}-m^2-\xi^2}\right)}}{\sqrt{\frac{a^2}{4}-m^2-\xi^2}}\1_{\set{\xi^2<\frac
    {a^2}{4}-m^2}}.
\end{split}\end{align}
Observe that the small frequency regime exists if and only if $\frac{a^2}{4}-m^2> 0$.
We see through the above expression explicitly how damping and mass terms affect the solution by comparing it to the wave fundamental solution \cite{evansPDE}. Note that $G$ is non-elementary except in the critial damping case $m^2 = a^2/4$ (see \eqref{G: critical} below).
The following estimates will be useful.
\begin{lemma}\label{lem:variance bound}
    Let $u$ be defined as in \eqref{eq:mild sol}. 
    Fix $0<T<\infty$.
    Then
    $$\E[u(t,x)^2] \lesssim t^2$$
    uniformly for all $t\in[0,T]$ and $x \in \R$, where the implicit constant depends only on $T,a,m$.
\end{lemma}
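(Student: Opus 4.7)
The plan is to apply Walsh's isometry together with Plancherel in the spatial variable to reduce the estimate to a double integral in $(s,\xi)$ of $|\hat G(s,\xi)|^2$, and then bound this integral by splitting into the high- and low-frequency regimes exhibited in \eqref{Fourier G}. Concretely,
\begin{align*}
    \E[u(t,x)^2] = \int_0^t\!\!\int_\R G(t-s,x-y)^2\,dy\,ds = \int_0^t\!\!\int_\R G(s,y)^2\,dy\,ds = \frac{1}{2\pi}\int_0^t\!\!\int_\R |\hat G(s,\xi)|^2\,d\xi\,ds,
\end{align*}
so it suffices to show $\int_\R |\hat G(s,\xi)|^2\,d\xi \lesssim s$ uniformly for $s \in (0,T]$, which upon integration in $s$ yields the claimed $t^2$ bound.

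For the high-frequency regime $\xi^2>\tfrac{a^2}{4}-m^2$, setting $\omega(\xi)=\sqrt{\xi^2+m^2-\tfrac{a^2}{4}}$, I use the elementary estimate $|\sin(s\omega)/\omega|\le \min(s,\,1/\omega)$ together with $e^{-as/2}\le e^{|a|T/2}$ to get
\begin{align*}
    \int_{\xi^2>\frac{a^2}{4}-m^2} |\hat G(s,\xi)|^2\,d\xi \lesssim \int_\R \min\!\left(s^2,\,\frac{1}{\omega(\xi)^2}\right)d\xi.
\end{align*}
Splitting at the threshold $\omega(\xi)\sim 1/s$, one sees that $|\xi|\lesssim 1/s$ contributes $\lesssim s^2\cdot (1/s)=s$, while the complementary region contributes $\lesssim \int_{|\xi|\gtrsim 1/s}\xi^{-2}\,d\xi\lesssim s$; a separate easy bound handles the case when $s$ is bounded away from $0$ (where there is no splitting to do and everything is $O(1)$). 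For the low-frequency regime (which is only present when $\tfrac{a^2}{4}>m^2$), the support $|\xi|\le \tfrac{|a|}{2}$ is bounded, and I use $\sinh(x)\le x e^x$ for $x\ge 0$ together with $\sqrt{\tfrac{a^2}{4}-m^2-\xi^2}\le \tfrac{|a|}{2}$ to bound the integrand pointwise by $s^2 e^{s(|a|-a)/2}\le s^2 e^{|a|T}$, so this regime contributes at most a constant multiple of $s^2$, which is $\lesssim s$ on $(0,T]$.

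Combining these two contributions gives $\int_\R|\hat G(s,\xi)|^2\,d\xi\lesssim s$ with implicit constant depending only on $T$, $a$, $m$, and integrating in $s\in[0,t]$ yields $\E[u(t,x)^2]\lesssim t^2$, as required. The estimate is translation invariant in $x$, which is consistent with the right-hand side being independent of $x$. The only mildly delicate step is the high-frequency splitting, since one must handle both small $s$ (where the $\min$ actually splits) and $s$ of order $1$ (where the damping factor $e^{-as/2}$ and the boundedness of the integration threshold give everything for free); both cases are routine once set up.
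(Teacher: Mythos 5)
Your proof is correct, and the opening step (It\^{o}--Walsh isometry followed by spatial Plancherel and a split into the low- and high-frequency regimes of \eqref{Fourier G}) is exactly the paper's. Where you diverge is in the organization of the frequency integral: you isolate the cleaner intermediate claim $\int_\R |\hat G(s,\xi)|^2\,d\xi \lesssim s$ uniformly for $s\in(0,T]$, proved via the unified bound $|\sin(s\omega)/\omega|\le\min(s,1/\omega)$ and a split at the threshold $\omega\sim 1/s$, and then integrate once in $s$. The paper instead estimates the double $(s,\xi)$ integral directly, decomposing the high-frequency part further into an annulus $K<|\xi|<L$ (handled like the low-frequency piece) and a tail $|\xi|>L$, for which it runs a case analysis on whether $t$ is above or below $\tfrac{1}{2K}$. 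The two routes are of comparable length, but your pointwise-in-$s$ reduction makes the role of the critical scale $|\xi|\sim 1/s$ explicit and avoids the case split in $t$; you correctly flag that the $\min(s^2,1/\omega^2)$ splitting degenerates when $s$ is of order one, where the whole integral is just $O(1)\lesssim s$. One cosmetic slip: in the low-frequency bound the exponent should be $s(|a|-a)$ rather than $s(|a|-a)/2$, since the $\sinh^2$ produces $e^{2s\sqrt{\cdots}}$; this is absorbed into the constant and does not affect the conclusion.
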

\begin{proof}
    Using It\^{o}--Walsh isometry, the change of variables $s\mapsto t-s$, $y\mapsto x-y$, and Plancherel's theorem, we can write
    \begin{align*}
        \E[u(t,x)^2]
        =\int_0^t \int_{-\infty}^\infty |G(t-s,x-y)|^2 dyds
        =\int_0^t \int_{-\infty}^\infty |G(s,y)|^2 dyds
        \propto\int_0^t \int_{-\infty}^{\infty} |\hat G(s,\xi)|^2d\xi ds.
    \end{align*}
    Then, by \eqref{Fourier G},
    \begin{align*}
        \E[u(t,x)^2]
        \propto&\int_0^t \int_{-\infty}^{\infty} \left[e^{-as}\frac{\sin^2\left(s\sqrt{\xi^2+m^2-\frac{a^2}{4}}\right)}{\xi^2+m^2-\frac{a^2}{4}}\1_{\set{\xi^2>\frac
        {a^2}{4}-m^2}} \right.\\
        &\qquad \left.+e^{-as}\frac{\sinh^2{\left(s\sqrt{\frac{a^2}{4}-m^2-\xi^2}\right)}}{\frac{a^2}{4}-m^2-\xi^2}\1_{\set{\xi^2<\frac
        {a^2}{4}-m^2}}\right]d\xi ds\\
        = & \, I_1+I_2,
    \end{align*}
    where 
    \begin{align*}
        I_1:=\int_0^t \int_{|\xi|<K} e^{-as}\frac{\sinh^2{\left(s\sqrt{K^2-\xi^2}\right)}}{K^2-\xi^2}d\xi ds,\quad I_2:= \int_0^t \int_{|\xi|>K} e^{-as} \frac{\sin^2(s\sqrt{\xi^2-K^2})}{\xi^2-K^2} d\xi ds,
    \end{align*}
    and $K^2 = (\frac{a^2}{4}-m^2) \vee 0$.
    For $I_1$, we have
    \begin{align*}
        I_1=& \int_0^t \int_{\abs{\xi}<K}s^2 e^{-as}\frac{\sinh^2{\left(s\sqrt{K^2-\xi^2}\right)}}{s^2(K^2-\xi^2)}d\xi ds\\
        \lesssim& \int_0^t s^2e^{-as}ds
        \lesssim t^3 \lesssim t^2
    \end{align*}
    uniformly for all $t \in [0,T]$ and $x \in \R$, 
    where the first $\lesssim$ above used the fact that $\sinh(x)/x$ is uniformly bounded over any compact interval $I\subset \R$, and the implicit constants depend on $T,a,m$.

    For $I_2$, we choose and fix some constant $L>K$, say $L = K+1$, and further decompose $I_2=J_1+J_2,$ where \begin{align*}
    J_1:=\int_0^t\int_{K<\abs{\xi}<L} e^{-as} \frac{\sin^2(s\sqrt{\xi^2-K^2})}{\xi^2-K^2} d\xi ds,\quad J_2:=\int_0^t\int_{\abs{\xi}>L} e^{-as} \frac{\sin^2(s\sqrt{\xi^2-K^2})}{\xi^2-K^2} d\xi ds.
    \end{align*}
    We then use $\abs{\frac{\sin x}{x}}\leq 1$ and the same trick used for $I_1$ to obtain $J_1\lesssim t^2$.
    For $J_2$, we may use $|\sin{x}| \le 1 \wedge |x|$ and the fact that $\xi^2 - K^2 \gtrsim \xi^2$ for $|\xi| > L=K+1$ to obtain
    \begin{align*}
        J_2 &\lesssim \int_0^t \int_{|\xi|>L} e^{-as} \left( s^2 \wedge \frac{1}{\xi^2-K^2} \right) d\xi ds
        \lesssim \int_0^t \int_{|\xi|>L}  \left( s^2 \wedge \frac{1}{\xi^2} \right) d\xi ds
        \le \int_0^t \int_0^\infty \left( s^2 \wedge \frac{1}{\xi^2} \right) d\xi ds\\
        & \lesssim \int_0^t \left[ \int_0^{1/s} s^2 d\xi + \int_{1/s}^\infty \frac{1}{\xi^2} d\xi \right] ds
        \lesssim \int_0^t s \, ds \lesssim t^2.
    \end{align*}
    This concludes the proof for the second moment bound.
\end{proof}

\begin{lemma}\label{lem: time increment bound}
    Fix $0<T<\infty$.
    Then
    $$\E[|u(t,x)-u(s,x)|^2] \lesssim t(t-s)$$
    uniformly for $0\leq s<t\leq T$ and $x\in \R$, where the implicit constant depends only on $T,a,m$.
\end{lemma}

\begin{proof}
    We have \[u(t,x)-u(s,x)=\int_0^s \int_\R G(t-\tau,x-y)-G(s-\tau,x-y) W(d\tau,dy)+\int_s^t \int_\R G(t-\tau,x-y) W(d\tau,dy),\] giving us \begin{align*}
        \E[|u(t,x)-u(s,x)|^2]\propto&\int_0^s \int_\R \abs{\mathcal{F}[G(t-\tau,x-\cdot)-G(s-\tau,x-\cdot)](\xi)}^2d\xi d\tau\\
        &+\int_s^t \int_\R |\hat{G}(t-\tau,\xi)|^2 d\xi d\tau=:I_{1}+I_{2}.
    \end{align*}
    By a change of variable, the moment bound in Lemma \ref{lem:variance bound} gives $I_{2}\lesssim (t-s)^2 \le t(t-s)$.
    
    For $I_{1}$, again we use $K^2=(\frac{a^2}{4}-m^2)\vee 0$,
    choose and fix a constant $L>K$, say $L=K+1$,
    and apply the decomposition \[I_{1}=\int_0^s \int_\R (\cdots)=\int_0^s\int_{|\xi|<K}(\cdots)+\int_0^s \int_{K<|\xi|<L}(\cdots)+\int_0^s\int_{|\xi|>L}(\cdots)=:J_1+J_2+J_3.\]
    For  $J_1$, 
    we use the following application of the triangle inequality
    \[\label{est: exp trieq}
    |e^{-x} f(t) - e^{-y} f(s)|
    \le |e^{-x}( f(t)-f(s))|
    + |(e^{-x}-e^{-y})f(s)|,
    \]
    together with the elementary inequality $(a+b)^2 \le 2(a^2+b^2)$, to deduce the following:
    \begin{align*}
        J_1\propto&\int_0^s\int_{|\xi|<K} \left[e^{-\frac{a(t-\tau)}{2}}\frac{\sinh\left((t-\tau)\sqrt{K^2-\xi^2}\right)}{\sqrt{K^2-\xi^2}}-e^{-\frac{a(s-\tau)}{2}}\frac{\sinh\left((s-\tau)\sqrt{K^2-\xi^2}\right)}{\sqrt{K^2-\xi^2}}\right]^2d\xi d\tau\\
        \leq& \, 2\int_0^s\int_{\abs{\xi}<K} \left\{\Bigg[e^{-\frac{a}{2}(s-\tau)}\abs{\sinh((t-\tau)\sqrt{K^2-\xi^2})-\sinh((s-\tau)\sqrt{K^2-\xi^2})}\Bigg]^2 \right.\\
        & \left.+\Bigg[\sinh((t-\tau)\sqrt{K^2-\xi^2})\abs{e^{-\frac{a}{2}(t-\tau)}-e^{-\frac{a}{2}(s-\tau)}}\Bigg]^2\right\} \frac{d\xi d\tau}{K^2-\xi^2}\\
        \lesssim& \int_0^s\int_{\abs{\xi}<K} \Bigg[\sqrt{K^2-\xi^2}(t-s)\cosh\left(T\sqrt{K^2-\xi^2}\right)+\sinh(T\sqrt{K^2-\xi^2})(t-s)\frac{a}{2}\Bigg]^2 \frac{d\xi d\tau}{K^2-\xi^2}\\
        =&\, s(t-s)^2 \int_{\abs{\xi}<K} \frac{\left[\sqrt{K^2-\xi^2}\cosh\left(T\sqrt{K^2-\xi^2}\right)+\sinh(T\sqrt{K^2-\xi^2})\frac{a}{2}\right]^2}{K^2-\xi^2} d\xi.
    \end{align*}
    The third inequality used the mean value theorem and the fact that both $\exp$ and $\cosh$ are increasing on $\R_+$. The singularities of the integrand at $\xi=0,\pm K$ are integrable by an elementary calculus argument.
    This yields $J_1 \lesssim s(t-s)^2 \lesssim t(t-s)$.
    
    Similarly, we have \begin{align*}
        J_2\propto&\int_0^s\int_{K<|\xi|<L} \left[e^{-\frac{a(t-\tau)}{2}}\frac{\sin\left((t-\tau)\sqrt{\xi^2-K^2}\right)}{\sqrt{K^2-\xi^2}}-e^{-\frac{a(s-\tau)}{2}}\frac{\sin\left((s-\tau)\sqrt{K^2-\xi^2}\right)}{\sqrt{\xi^2-K^2}}\right]^2d\xi d\tau\\
        \leq& \, 2\int_0^s\int_{K<|\xi|<L} \left\{\Bigg[e^{-\frac{a}{2}(s-\tau)}\abs{\sin((t-\tau)\sqrt{\xi^2-K^2})-\sin((s-\tau)\sqrt{\xi^2-K^2})}\Bigg]^2 \right.\\
        & \left.+\Bigg[\sin((t-\tau)\sqrt{\xi^2-K^2})\abs{e^{-\frac{a}{2}(t-\tau)}-e^{-\frac{a}{2}(s-\tau)}}\Bigg]^2\right\} \frac{d\xi d\tau}{\xi^2-K^2}\\
        \lesssim& \int_0^s\int_{K<|\xi|<L} \Bigg[\sqrt{\xi^2-K^2}(t-s)+\left|\sin((t-\tau)\sqrt{\xi^2-K^2})\right|(t-s)\frac{a}{2}\Bigg]^2 \frac{d\xi d\tau}{\xi^2-K^2}\\
        =&\, s(t-s)^2 \int_{K<\abs{\xi}<L} \frac{\left[\sqrt{\xi^2-K^2}+T\sqrt{\xi^2-K^2}\frac{a}{2}\right]^2}{\xi^2-K^2} d\xi \lesssim t(t-s).
    \end{align*}

    For $J_3$, we split it into two parts as we did above.
    For the first term, use the trigonometric identity $\sin(A) - \sin(B) = 2\cos(\frac{A+B}{2})\sin(\frac{A-B}{2})$ and bound the $\cos$ factor by 1.
    For the second term, bound $\sin$ by 1 and apply mean value theorem to the difference of the exponential functions.
    This yields the following:
    \begin{align*}
        J_3\propto& \int_0^s\int_{\abs{\xi}>L} \left[e^{-\frac{a(t-\tau)}{2}}\frac{\sin\left((t-\tau)\sqrt{\xi^2-K^2}\right)}{\sqrt{\xi^2-K^2}}-e^{-\frac{a(s-\tau)}{2}}\frac{\sin\left((s-\tau)\sqrt{\xi^2-K^2}\right)}{\sqrt{\xi^2-K^2}}\right]^2 d\xi d\tau\\
        \leq& \, 2\int_0^s\int_{|\xi|>L} \Bigg[e^{-\frac{a}{2}(s-\tau)}\abs{\sin((t-\tau)\sqrt{\xi^2-K^2})-\sin((s-\tau)\sqrt{\xi^2-K^2})}\Bigg]^2\\
        &+\Bigg[\sin((t-\tau)\sqrt{\xi^2-K^2})\abs{e^{-\frac{a}{2}(t-\tau)}-e^{-\frac{a}{2}(s-\tau)}}\Bigg]^2 \frac{d\xi d\tau}{\xi^2-K^2}\\
        \leq&\, 2\int_0^s\int_{|\xi|>L} \Bigg[ 4\sin^2((t-s)\sqrt{\xi^2-K^2}) + a^2(t-s)^2\sin^2((t-\tau)\sqrt{\xi^2-K^2})\Bigg]\frac{d\xi d\tau}{\xi^2-K^2}\\
        \lesssim& \int_0^s\int_L^\infty\frac{\sin^2((t-s)\sqrt{\xi^2-K^2})}{\xi^2-K^2} d\xi d\tau +(t-s)^2\int_0^s\int_L^\infty\frac{\sin^2((t-\tau)\sqrt{\xi^2-K^2})}{\xi^2-K^2} d\xi d\tau=:S_1+S_2.
    \end{align*}
    Since $L>K$, we have $S_2\lesssim s(t-s)^2 \le t(t-s)$.
    For $S_1$, we use the change of variable $\xi\mapsto \xi/(t-s)$, giving us \begin{align*}
        S_1=&\, s(t-s)\int_{(t-s)L}^{\infty} \frac{\sin^2(\sqrt{\xi^2 - (t-s)^2 K^2})}{\xi^2 - (t-s)^2 K^2} d\xi\\
        \leq &\, t(t-s)\int_{(t-s)L}^{\infty} \frac{1 \wedge (\xi^2 - (t-s)^2 K^2)}{\xi^2 - (t-s)^2 K^2} d\xi\\
        \le&\, t(t-s) \left(\int_{(t-s)L}^{2TL} 1 d\xi + \int_{2TL}^\infty \frac{1}{\xi^2/2} d\xi\right) \lesssim t(t-s),
    \end{align*}
    where we have used the elementary fact that $\xi>2TL$ implies $\xi^2 - (t-s)^2K^2 \ge \xi^2 - T^2L^2 \ge \xi^2/2$ to obtain the second term in the second inequality.
    This finishes the proof of Lemma \ref{lem: time increment bound}.
\end{proof}

\begin{lemma}\label{lem: space increment bound}
    For any $t>0$ and $x,y
    \in \R$, we have $$\E[|u(t,x)-u(t,y)|^2]\lesssim t|x-y|,$$
    where the implicit constant depends only on $a$, $m$.
\end{lemma}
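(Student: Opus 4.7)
My approach mirrors the structure of Lemma \ref{lem: time increment bound}, with the role of the time increment played by the spatial shift $h := x-y$. By the It\^o--Walsh isometry, the change of variable $s \mapsto t-s$, and a translation in the spatial integration variable, one obtains
\[
\E[|u(t,x)-u(t,y)|^2] = \int_0^t \int_\R |G(s,w) - G(s,w-h)|^2 \, dw \, ds.
\]
Applying Plancherel together with the translation property $\mathcal{F}[G(s,\cdot - h)](\xi) = e^{-ih\xi}\hat G(s,\xi)$ converts this to the frequency-side expression
\[
\E[|u(t,x)-u(t,y)|^2] \propto \int_0^t \int_\R |\hat G(s,\xi)|^2 \, |1-e^{-ih\xi}|^2 \, d\xi \, ds = 4\int_0^t \int_\R |\hat G(s,\xi)|^2 \sin^2(h\xi/2) \, d\xi \, ds.
\]

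Since the target bound is linear in $|h|$ while the trivial estimate $\E[|u(t,x)-u(t,y)|^2] \le 2\E[u(t,x)^2] + 2\E[u(t,y)^2] \lesssim t^2 \le Tt$ (from Lemma \ref{lem:variance bound}) already gives $\lesssim t|h|$ when $|h| \ge 1$, I will restrict to $|h| \le 1$; by symmetry of $\sin^2$ I may also assume $h>0$. I then split the $\xi$-integral at $|\xi| = L$ for a fixed constant $L$ chosen so that $\xi^2 - K^2 \ge \xi^2/2$ for all $|\xi| > L$, where $K^2 = (a^2/4 - m^2)\vee 0$, which is slightly stronger than the choice $L=K+1$ made in the preceding lemmas but is used in the same spirit.

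For the low-frequency piece $|\xi| \le L$, the arguments used in Lemma \ref{lem:variance bound} (the uniform boundedness of $\sinh(x)/x$ and $\sin(x)/x$ on any bounded interval) yield $|\hat G(s,\xi)|^2 \lesssim s^2$ uniformly in $|\xi| \le L$. Combined with the Taylor bound $\sin^2(h\xi/2) \le h^2\xi^2/4 \lesssim h^2$ on this compact range, this gives a contribution of order $\int_0^t s^2 h^2 \, ds \lesssim t^3 h^2 \lesssim t|h|$, where the last step uses $h^2 \le |h|$ on $|h| \le 1$ together with $t \le T$. For the high-frequency piece $|\xi| > L$, the choice of $L$ gives $|\hat G(s,\xi)|^2 \le e^{-as}/(\xi^2-K^2) \lesssim 1/\xi^2$, and the scaling substitution $\eta = h\xi$ yields
\[
\int_{|\xi|>L} \frac{\sin^2(h\xi/2)}{\xi^2} \, d\xi = h\int_{|\eta|>Lh} \frac{\sin^2(\eta/2)}{\eta^2} \, d\eta \le h\int_\R \frac{\sin^2(\eta/2)}{\eta^2} \, d\eta \lesssim h,
\]
so integrating over $s \in [0,t]$ produces a contribution of order $t|h|$. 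Summing the two pieces gives the claimed estimate.

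I do not expect a serious obstacle: the only subtle point is that the crude low-frequency bound $t^3 h^2$ becomes linear in $|h|$ only after absorbing one factor of $|h|$ via $h^2 \le |h|$, which is precisely why the reduction to $|h|\le 1$ is needed, and that the $1/\xi^2$ decay on the high-frequency piece (rather than merely $1/|\xi|$) is essential in order for the scaling $\eta = h\xi$ to yield a convergent integral. Compared to Lemma \ref{lem: time increment bound}, no telescoping estimate on $e^{-as/2}$ is required, because the spatial shift acts by pure modulation on the Fourier side and decouples cleanly from the time variable.
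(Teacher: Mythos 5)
Your proposal is correct. It reaches the same inequality by a route that differs in detail from the paper's, though the skeleton (It\^o--Walsh isometry, Plancherel, split the frequency integral, use $|\hat G(s,\xi)|^2\lesssim 1/\xi^2$ at high frequency) is the same. The paper writes $|e^{-ix\xi}-e^{iy\xi}|^2 \le 4\wedge(|x-y|^2\xi^2)$ and splits the $\xi$-integral at the $h$-dependent threshold $|\xi|=1/|x-y|$, then handles the cases $|x-y|>1$ and $|x-y|\le 1$ with a further sub-split $\int_0^{1/|x-y|}=\int_0^1+\int_1^{1/|x-y|}$; the low-frequency pieces use only the crude bound $|\hat G(s,\xi)|\lesssim 1$. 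You instead dispatch $|h|\ge 1$ trivially from Lemma~\ref{lem:variance bound}, split at a \emph{fixed} $L$, use the finer low-frequency bound $|\hat G(s,\xi)|\lesssim s$ together with $\sin^2(h\xi/2)\lesssim h^2$, and handle the high-frequency tail by the scaling $\eta=h\xi$ applied to $\int\sin^2(\eta/2)/\eta^2\,d\eta<\infty$. Your version avoids the $h$-dependent split and the two-case analysis, at the cost of invoking Lemma~\ref{lem:variance bound} for the reduction; both are clean and essentially equivalent in effort. One minor remark: the paper's printed definition $K^2=(m^2-a^2/4)\wedge 0$ appears to carry a sign typo (it would make $K^2\le 0$), and the quantity your argument actually needs, $K^2=(a^2/4-m^2)\vee 0$, is the correct one.
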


\begin{proof}
    We can write
    \begin{align*}
        \E[\abs{u(t,x)-u(t,y)}^2]
        =&\int_0^t \int_{-\infty}^\infty [G(s, x-z) - G(s, y-z)]^2 dzds\\
        \propto&\int_0^t\int_{-\infty}^{\infty}\abs{e^{-ix\xi}-e^{iy\xi}}^2\hat{G}(s,\xi)^2d\xi ds\\
        \leq& \int_0^t\int_{-\infty}^{\infty} \left(4 \wedge (|x-y|^2|\xi|^2) \right)\hat{G}(s,\xi)^2d\xi ds\\
        \lesssim& \int_0^t \int_0^{1/|x-y|} |x-y|^2|\xi|^2 \hat{G}(s,\xi)^2d\xi ds + \int_0^t\int_{1/|x-y|}^\infty |\hat{G}(s,\xi)|^2 d\xi ds.
    \end{align*}
    Since $\sin(x)/x$ is bounded, $\sinh(x)/x$ is locally bounded, and $\hat{G}(t,\xi) = t e^{-at/2}$ when $\xi^2 = a^2/4-m^2$,
    we have the bound $|\hat{G}(s,\xi)|^2\lesssim\frac{1}{\xi^2}$ for all $s \in [0,T]$ and $\xi>0$, which gives $$\int_0^t\int_{1/|x-y|}^\infty |\hat{G}(s,\xi)|^2 d\xi ds\lesssim \int_0^t\int_{1/|x-y|}^\infty \frac{1}{\xi^2}d\xi ds\leq t|x-y|.$$
    If $|x-y|>1$, then $|\xi|<1/|x-y|$ implies $|\xi|<1$. This together with $|\hat{G}(s,\xi)| \lesssim 1$ for $|\xi|<1$ gives
    \begin{align*}
        \int_0^t \int_0^{1/|x-y|} |x-y|^2|\xi|^2 \hat{G}(s,\xi)^2d\xi ds
        \lesssim |x-y|^2 \int_0^t \int_0^{1/|x-y|}  d\xi ds
        = t |x-y|.
    \end{align*}
    Finally, if $|x-y|\le 1$, we use $\int_0^{1/|x-y|}=\int_0^1+\int_1^{1/|x-y|}.$ 
    The first estimate and $|\xi|<1$ then gives $$\int_0^t \int_0^{1} |x-y|^2|\xi|^2 \hat{G}(s,\xi)^2d\xi ds\lesssim t |x-y|^2.$$
    For $|\xi|>1$, we again use $|\hat{G}(s,\xi)|^2\lesssim \frac{1}{\xi^2}$, so $$\int_0^t \int_1^{1/|x-y|} |x-y|^2|\xi|^2 \hat{G}(s,\xi)^2d\xi ds\lesssim t(|x-y|^2-|x-y|)\lesssim t|x-y|.$$
    This finishes the proof of Lemma \ref{lem: space increment bound}.
\end{proof}

Combining Lemma \ref{lem: time increment bound} and Lemma \ref{lem: space increment bound}, and applying the Kolmogorov continuity theorem, we obtain:

\begin{proposition}\label{lem:u-u}
    For any $T>0$, there exists $C>0$ such that
    \begin{align}\label{E:u-u}
        \E[|u(t,x)-u(s,y)|^2] \le C(|t-s|+|x-y|)
    \end{align}
    uniformly for all $s,t \in [0,T]$ and $x,y \in \R$.
    Hence, the sample function $(t,x) \mapsto u(t,x)$ is a.s. continuous on $[0,\infty) \times \R$ and locally H\"older of order $1/2-$ in both $t$ and $x$.
\end{proposition}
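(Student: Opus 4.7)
The plan is very short, since the proposition essentially packages the two preceding lemmas. First I would use the triangle inequality
\[
u(t,x)-u(s,y) = \big(u(t,x)-u(s,x)\big) + \big(u(s,x)-u(s,y)\big)
\]
together with the inequality $(a+b)^2 \le 2a^2+2b^2$ to reduce \eqref{E:u-u} to bounding a pure time increment at fixed spatial point $x$ and a pure spatial increment at fixed time $s$. Lemma \ref{lem: time increment bound} then handles the first term by $\E[|u(t,x)-u(s,x)|^2] \lesssim s|t-s| \le T|t-s|$, and Lemma \ref{lem: space increment bound} handles the second term by $\E[|u(s,x)-u(s,y)|^2] \lesssim s|x-y| \le T|x-y|$. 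Adding these yields \eqref{E:u-u} with a constant depending only on $T,a,m$.

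For the sample-path regularity conclusion, I would exploit Gaussianity: since $u(t,x)$ is a Wiener integral against the space-time white noise, the increment $u(t,x)-u(s,y)$ is centered Gaussian, so all of its even moments are controlled by its second moment,
\[
\E[|u(t,x)-u(s,y)|^{2k}] = c_k \big(\E[|u(t,x)-u(s,y)|^2]\big)^k \le c_k C^k (|t-s|+|x-y|)^k
\]
for every integer $k \ge 1$. Applying the Kolmogorov continuity criterion to the $\R^2$-indexed process with exponent $p=2k$ then produces a continuous modification that is locally H\"older of every exponent $\beta < (k-2)/(2k)$ with respect to the Euclidean distance on $[0,T]\times\R$. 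Letting $k \to \infty$ yields H\"older regularity of any order strictly less than $1/2$ in both $t$ and $x$.

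There is essentially no obstacle here beyond bookkeeping: the only thing worth being careful about is that different values of $k$ a priori produce different continuous modifications, but any two continuous modifications of the same process agree a.s.\ on the countable dense set $\Q_+ \times \Q$, so a single version enjoys the full family of H\"older regularities simultaneously. The need to send $k \to \infty$ (rather than fixing a single $k$) is what recovers the sharp $1/2-$ exponent in two parameters, and is the only subtlety of the argument.
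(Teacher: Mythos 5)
Your proof is correct and matches the paper's approach exactly: the paper simply states that Proposition \ref{lem:u-u} follows from combining Lemmas \ref{lem: time increment bound} and \ref{lem: space increment bound} via the Kolmogorov continuity theorem, which is precisely the triangle-inequality decomposition plus Gaussian moment bookkeeping you spell out. Your remarks on needing to let $k\to\infty$ to recover the full $1/2-$ exponent in two parameters, and on consolidating the modifications, are the standard (and correct) details behind the paper's one-line invocation of Kolmogorov.
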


\section{Reduction to the critical damping case}
\label{s:reduction}

We now explain how the proofs of the main theorems can be reduced to the critical damping case $m^2=a^2/4$.
This transformation is a standard PDE technique.
Note that this simplification immediately implies all of the main theorems if $a=0$ by \cite{Walsh86,CN88,LeeXiao22}, so for all future sections we will assume $a>0$ to avoid triviality. 
%
%
First, re-write equation \eqref{eq:DSKG} as:
\begin{align}\label{eq:DSKG+b}
    \begin{cases}
        (\DAlambert + a\partial_t + a^2/4) u(t,x) = \dot{W} (t,x) + b(u(t,x)), \quad t>0,x\in \R,\\
        u(0,x) = \partial_t u(0,x) = 0,
    \end{cases}
\end{align}
where $b:\R \to \R$ is the function defined by
\begin{align*}
    b(x) = (a^2/4-m^2) x.
\end{align*}
Since $b$ is globally Lipschitz and grows linearly, standard existence and uniqueness theory implies that the solution $u$ to \eqref{eq:DSKG} is also the unique mild solution to \eqref{eq:DSKG+b} which solves the integral equation
\begin{align}\label{eq:DSKG+b:mild}
    u(t,x) = \int_0^t \int_\R \Gamma(t-s, x-y) W(ds, dy) + \int_0^t \int_\R \Gamma(t-s,x-y) b(u(s,y)) dy ds, \quad t>0,x\in \R,
\end{align}
where $\Gamma$ is the fundamental solution to $\DAlambert + a \partial_t + a^2/4$.
Notice that in this case, we have by \eqref{Fourier G} \begin{equation}\label{G: critical}
     \hat{\Gamma}(t,\xi) = \frac{e^{-at/2}\sin(t|\xi|)}{|\xi|}
     \quad \text{and hence} \quad
     \Gamma(t,x) = \frac{e^{-at/2}}{2} \1_{\{|x|< t\}}.
\end{equation}
As mentioned in the preliminaries, this special case is referred to as critical damping and significantly simplifies our problem. To show that the simplification is valid, we write \eqref{eq:DSKG+b:mild} as
\begin{align}\label{E:u=u_C+u_L}
    &u(t,x) = u_C(t,x) + u_L(t,x),\text{ where }\\
    &u_C(t,x) = \int_0^t \int_\R \Gamma(t-s, x-y) W(ds, dy),\\
    &u_L(t,x) = (a^2/4-m^2) \int_0^t \int_\R \Gamma(t-s,x-y) u(s,y) dy ds.
\end{align}
In particular, $u_C$ is the mild solution to \eqref{eq:DSKG} with $m^2 = a^2/4$.
In the following lemma, we show that $u_L$ is a.s. locally Lipschitz (along characteristic direction).

\begin{lemma}\label{lem:u_L}
    For any fixed compact set $F \subset [0,\infty) \times \R$, there exists $C>0$ such that
    \begin{align}\label{E:u_L:Lip}
        \sup_{h \in [0,1]}\sup_{(t,x)\in F} \frac{|u_L(t+h,x+h)-u_L(t,x)|}{h} \le C \sup_{(s,y)\in \mathscr{C}(F)}|u(s,y)| < +\infty \quad \text{a.s.}
    \end{align}
    where $\mathscr{C}(F) = \bigcup_{h \in [0,1]}\bigcup_{(t,x)\in F}\{(s,y) \in [0,\infty) \times \R : |x+h-y| \le t+h-s\}$.
\end{lemma}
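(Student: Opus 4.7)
The plan is to use the explicit form of $\Gamma$ from \eqref{G: critical} to write
\[
u_L(t,x) = \frac{a^2/4 - m^2}{2} \int\!\int_{R(t,x)} e^{-a(t-s)/2}\, u(s,y)\, dy\, ds,
\]
where $R(t,x) = \{(s,y) : 0 < s < t,\ |x-y| < t-s\}$ is the backward light cone, and then pass to characteristic coordinates $(w,z) = ((s-y)/\sqrt{2},(s+y)/\sqrt{2})$ (Jacobian $1$). The key point is that in these coordinates $R(t,x)$ becomes the right-angled triangle $T(w_0,z_0) := \{(w,z): w<w_0,\ z<z_0,\ w+z>0\}$ with $(w_0,z_0) = ((t-x)/\sqrt{2},(t+x)/\sqrt{2})$, and the characteristic increment $(t,x) \mapsto (t+h,x+h)$ corresponds to shifting only $z_0 \mapsto z_0 + h\sqrt{2}$, leaving $w_0$ fixed.

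After factoring $e^{-at/2}$ out of the kernel, I would split the increment as
\begin{align*}
u_L(t+h,x+h) - u_L(t,x)
&= \frac{a^2/4-m^2}{2}\, e^{-a(t+h)/2} \int\!\int_{T(w_0,z_0+h\sqrt{2}) \setminus T(w_0,z_0)} e^{a(w+z)/(2\sqrt{2})}\, u\, dw\, dz \\
&\quad + \frac{a^2/4-m^2}{2}\, \bigl(e^{-a(t+h)/2} - e^{-at/2}\bigr) \int\!\int_{T(w_0,z_0)} e^{a(w+z)/(2\sqrt{2})}\, u\, dw\, dz.
\end{align*}
The sliver has area $\int_{z_0}^{z_0+h\sqrt{2}} (w_0+z)\, dz = (w_0+z_0)h\sqrt{2} + h^2$, which is $\lesssim h$ uniformly for $(t,x) \in F$ and $h \in [0,1]$; the second integral is over a bounded region, and the exponential difference is $\le (a/2)h$ by the mean value theorem. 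Bounding the integrand pointwise by $C|u(s,y)|$ on $\mathscr{C}(F)$ makes both terms $\le C h \sup_{(s,y)\in\mathscr{C}(F)}|u(s,y)|$, and the a.s.\ finiteness of this supremum is immediate from the a.s.\ continuity of $u$ on the compact set $\mathscr{C}(F)$ via Proposition \ref{lem:u-u}.

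I do not anticipate any serious obstacle. The essential point is that characteristic coordinates diagonalize the geometry of the increment, so the region swept out has area $O(h)$ and one never has to differentiate $u$ in a transverse direction. Indeed, a naive substitution $(s,y) \mapsto (s-h,y-h)$ in the original integral defining $u_L(t+h,x+h)$ leaves behind a bulk term $u(s+h,y+h) - u(s,y)$, which only admits a $h^{1/2-\varepsilon}$ bound given that $u$ is H\"older of order $1/2-$, and thus fails to give the claimed Lipschitz estimate.
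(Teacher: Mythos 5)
Your proof is correct and takes essentially the same approach as the paper: both decompose the increment into a contribution from the set difference of the two light cones, which has area $O(h)$, and a contribution from the kernel difference $e^{-a(t+h-s)/2}-e^{-a(t-s)/2}=O(h)$ on the common region, and both bound the $u$ factor by $\sup_{\mathscr{C}(F)}|u|$, finite a.s.\ by continuity of $u$. The only cosmetic difference is that you pass to characteristic coordinates first (so the region change appears as a single sliver shifting $z_0\mapsto z_0+h\sqrt{2}$), whereas the paper works directly in $(s,y)$ and ends up with three pieces ($I_1$ for the kernel difference on the common cone, $I_2$ for the lateral sliver with $s<t$, $I_3$ for the temporal cap $t<s<t+h$), which together are exactly your two terms.
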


\begin{proof}
    We first apply absolute value to the integrand to see that
    \begin{align*}
        &|u_L(t+h,x+h)-u_L(t,x)|\\
        &\le |\tfrac{a^2}{4}-m^2| \int_{\R_+} \int_\R \left|\Gamma(t+h-s,x+h-y) \1_{\{|x+h-y| \le t+h-s\}} - \Gamma(t-s,x-y) \1_{\{|x-y| \le t-s\}}\right| |u(s,y)| dy ds\\
        & \le |\tfrac{a^2}{4}-m^2| \sup_{(s,y)\in \mathscr{C}(F)} |u(s,y)|\, (I_1+I_2+I_3),
    \end{align*}
    where
    \begin{align*}
        &I_1 = \int_0^t \int_{x-(t-s)}^{x+(t-s)} |\Gamma(t+h-s,x+h-y)  - \Gamma(t-s,x-y)| dyds,\\
        &I_2 = \int_0^t \int_{x+(t-s)}^{x+h+(t+h-s)} \Gamma(t+h-s,x+h-y) dyds,\\
        &I_3 = \int_t^{t+h} \int_{x+h-(t+h-s)}^{x+h+(t+h-s)} \Gamma(t+h-s,x+h-y) dyds.
    \end{align*}
    Then, we may use \eqref{G: critical} to estimate these three terms easily:
    \begin{align*}
        I_1 &= \frac12 \int_0^t \int_{x-(t-s)}^{x+(t-s)} (e^{-a(t-s)/2}-e^{-a(t+h-s)/2}) dyds
        = (1-e^{-ah/2}) \int_0^t (t-s) e^{-a(t-s)/2} ds \lesssim h,
    \end{align*}
    \begin{align*}
        I_2 & = \frac12 \int_0^t \int_{x+(t-s)}^{x+(t-s)+2h} e^{-a(t+h-s)/2} dyds
        \le h \int_0^t e^{-a(t-s)/2} ds \le h,
    \end{align*}
    \begin{align*}
        I_3 = \frac12 \int_t^{t+h} \int_{x-(t-s)}^{x+(t-s)+2h} e^{-a(t+h-s)} dyds \le h^2.
    \end{align*}
    Finally, we finish the proof using the property that $u$ is a.s. bounded on the compact set $\mathscr{C}(F)$, due to the a.s. continuity of $u$ (see Proposition \ref{lem:u-u}).
\end{proof}

\subsection{Reduction of Theorem \ref{th:lil}}

Fix $t_0 \ge 0$ and $x_0 \in \R$.
Assume that Theorem \ref{th:lil} holds for $u_C$, i.e., there exists a constant $0<K_1<\infty$ such that $\P(\Omega_1) = 1$, where
\begin{align*}
    \Omega_1 = \left\{\limsup_{h \to 0^+} \frac{|u_C(t_0+h,x_0+h)-u_C(t_0,x_0)|}{\sqrt{h \log\log(1/h)}} = K_1 \right\}.
\end{align*}
Thanks to Lemma \ref{lem:u_L}, we may take $F= \{(t_0,x_0)\}$ and find an event $\Omega_2$ with $\P(\Omega_2) = 1$ on which \eqref{E:u_L:Lip} holds.
This implies that for all $\omega\in\Omega_2$,
\begin{align*}
    \limsup_{h \to 0^+} \frac{|u_L(t_0,x_0+h)-u_L(t_0,x_0)|}{\sqrt{h \log\log(1/h)}} = 0.
\end{align*}
Hence, we may use \eqref{E:u=u_C+u_L} and the elementary inequality
\begin{align}\label{limsup:ineq}
    \limsup |f(x)| - \limsup |g(x)| \le \limsup |f(x)+g(x)| \le \limsup |f(x)| + \limsup |g(x)|
\end{align}
(provided $\limsup |g(x)|<+\infty$)
to deduce that \eqref{E:lil} holds on the event $\Omega_1 \cap \Omega_2$ which has probability 1.

\subsection{Reduction of Theorem \ref{th:mc}}

Fix $w_0 \ge 0$ and $0<a_1<a_2$.
Assume that Theorem \ref{th:mc} holds for $u_C$, i.e., we can find constants $0<C_1<C_2<\infty$ such that for every sub-interval $J$ of $I$, there exists a constant $K_J \in [C_1,C_2]$ such that $\P(\Omega_J) = 1$, where
\begin{align*}
    \Omega_J = \left\{ \limsup_{h\to0^+} \sup_{(t,x) \in J}\frac{|u_C(t+h,x+h)-u_C(t,x)|}{\sqrt{h\log(1/h)}} = K_J \right\}.
\end{align*}
Taking $F = I$ in Lemma \ref{lem:u_L}, we can find an event $\Omega_3$ with $\P(\Omega_3) = 1$ on which \eqref{E:u_L:Lip} holds.
It follows that for all $\omega \in \Omega_3$,
\begin{align*}
    \limsup_{h\to0^+}\sup_{(t,x)\in I} \frac{|u_L(t+h,x+h)-u_L(t,x)|}{\sqrt{h\log(1/h)}} = 0.
\end{align*}
Then, apply \eqref{E:u=u_C+u_L} and \eqref{limsup:ineq} to see that \eqref{E:mc} holds on the event $\Omega_J \cap \Omega_3$ which has probability 1.

\subsection{Reduction of Theorem \ref{th:sim:lil}}

In order to simplify notations, write
\begin{align*}
    \ell(w,z) &= \limsup_{h\to0^+} \frac{|u\big(\frac{w+z}{\sqrt{2}}+h, \frac{-w+z}{\sqrt{2}}+h\big) - u\big(\frac{w+z}{\sqrt{2}}, \frac{-w+z}{\sqrt{2}}\big)|}{\sqrt{h\log\log(1/h)}},\\
    \ell_i(w,z) &= \limsup_{h\to0^+} \frac{|u_i\big(\frac{w+z}{\sqrt{2}}+h, \frac{-w+z}{\sqrt{2}}+h\big) - u_i\big(\frac{w+z}{\sqrt{2}}, \frac{-w+z}{\sqrt{2}}\big)|}{\sqrt{h\log\log(1/h)}}, \quad i = C, L.
\end{align*}
Fix $N>0$, $0<a_1<a_2$ and assume that Theorem \ref{th:sim:lil} holds for $u_C$.
This implies that there exists a constant $0<K_2<\infty$ such that $\P(A_{z_0}) = 1$ for all $z_0 \in [a_1,a_2]$, where
\begin{align*}
    A_{z_0} = \left\{ \ell_C(w,z_0) \le K_2 \text{ for all $w \in [0,N]$} \right\}.
\end{align*}
An application of Lemma \ref{lem:u_L} with
\[
    F = \left\{ (t,x) = (\tfrac{w+z}{\sqrt{2}}, \tfrac{-w+z}{\sqrt{2}}) : w \in [0,N], z \in [a_1,a_2] \right\}
\]
shows that there is an event $B$ with $\P(B)=1$ on which \eqref{E:u_L:Lip} holds.
It is clear that for any $z_0 \in [a_1,a_2]$,
\begin{align*}
    B \subset \left\{\ell_L(w,z_0) = 0  \text{ for all $w \in [0,N]$}\right\}.
\end{align*}
It follows from \eqref{E:u=u_C+u_L} and \eqref{limsup:ineq} that on the event $A_{z_0} \cap B$, which has probability 1, we have
\[
    \ell(w,z_0) \le \ell_C(w,z_0) + \ell_L(w,z_0) \le K_2 \quad \text{for all $w \in [0,N]$.}
\]

\subsection{Reduction of Theorem \ref{th:sing}}

Fix $w_0 \ge 0$, $0 < z_0 < z_0'$ and let $t_0=w_0/\sqrt{2}$.
Assume that part (i) Theorem \ref{th:sing} holds for $u_C$.
Then we can find an $\mathscr{F}_{t_0}$-measurable random variable $Z=Z(\omega)$ such that $\P(\Omega_4)=1$, where $\Omega_4$ denotes the event
\[
    \Omega_4 = \{Z \in [z_0, z_0'] \text{ and } \ell_C(w_0,Z) = + \infty\}.
\]
Taking
\[
    F = \left\{ (t,x) = (\tfrac{w+z}{\sqrt{2}}, \tfrac{-w+z}{\sqrt{2}}) : w =w_0, z \in [z_0,z_0'] \right\}
\]
in Lemma \ref{lem:u_L}, we can find another event $\Omega_5$ with $\P(\Omega_5)=1$ on which \eqref{E:u_L:Lip} holds.
In particular, this implies that
\[
    \Omega_5 \subset \{ \ell_L(w_0,z) = 0 \text{ for all $z \in [z_0,z_0']$}\}.
\]
Therefore, $\Omega_4 \cap \Omega_5$ is an event with probability 1 on which
\[
    \ell(w,Z) \ge \ell_C(w,Z) - \ell_L(w,Z) = +\infty,
\]
thanks to \eqref{E:u=u_C+u_L} and \eqref{limsup:ineq}. This proves that part (i) of Theorem \ref{th:sing} holds for $u$.

Let $Z$ be any $\mathscr{F}_{t_0}$-measurable random variable with $\P(\Omega_6)=1$, where
\[
    \Omega_6 = \{Z \in [z_0, z_0'] \text{ and } \ell_C(w_0,Z) = + \infty\},
\]
and assume that part (ii) of Theorem \ref{th:sing} holds for $u_C$. In particular, this assumption implies that $\P(E_N)=1$ for all $N>w_0$, where
\[
    E_N = \{ \ell_C(w,Z) = +\infty \text{ for all $w \in [w_0,N]$} \}.
\]
For each $N >w_0$, we may apply Lemma \ref{lem:u_L} with
\[
    F = \left\{ (t,x) = (\tfrac{w+z}{\sqrt{2}}, \tfrac{-w+z}{\sqrt{2}}) : w \in [w_0, N], z \in [z_0,z_0'] \right\}
\]
to see there is an event $G_N$ with $\P(G_N)=1$ on which \eqref{E:u_L:Lip} holds.
In particular,
\[
    G_N \subset \{\ell_L(w,Z) = 0 \text{ for all $w \in [w_0,N]$}\}.
\]
It follows that on $E_N \cap G_N$,
\[
    \ell(w,Z) \ge \ell_C(w,Z) - \ell_L(w,Z) = +\infty \quad \text{for all $w\in [w_0,N]$.}
\]
Hence, $\bigcap_{N \in \N, N>w_0}(E_N \cap G_N)$ is an event with probability 1 on which \eqref{E:propagation} holds.
Therefore, part (ii) of Theorem \ref{th:sing} holds for $u$.

\section{Proof of Theorems \ref{th:lil} and \ref{th:mc}}
\label{s:4}

Thanks to the reduction in Section \ref{s:reduction}, we assume in the remainder of the paper that $m^2=a^2/2$.
Denote by $\Tilde{F}[f](\tau,\xi)$ the space-time fourier transform of an arbitrary $f(t,x)$ and $\Tilde{\Gamma}(\tau,\xi):=\Tilde{F}[\Gamma(\cdot,\circ)](\tau,\xi)$.

We will apply the results of \cite{LX23} to prove Theorems \ref{th:lil} and \ref{th:mc}.
To this end, we first establish some technical lemmas in order to verify the assumptions in \cite{LX23}.
Let $\tilde{W}$ be a complex-valued space-time white noise, i.e., $\tilde{W} = \tilde{W}_1 + i \tilde{W}_2$, where $\tilde{W}_1, \tilde{W}_2$ are independent space-time white noises.

\begin{lemma}\label{lemma: Polarity 8.3}
    Fix $T>0$. Define the Gaussian random field $v(t,x)$ by \begin{equation}\label{eq: v}
       v(t,x)=\Re \int_\R\int_\R \Tilde{F}[\Gamma(t-\cdot,x-\circ)\1_{[0,t]}(\cdot)](\tau,\xi)\Tilde{W}(d\tau,d\xi),
    \end{equation}
    and the truncated random field $v(A,t,x)$ for a Borel set $A \subset [0,\infty)$ by 
    \begin{equation}\label{eq: vab}
       v(A,t,x)=\Re \iint_{|\tau|\vee |\xi|\in A} \Tilde{F}[\Gamma(t-\cdot,x-\circ)\1_{[0,t]}(\cdot)](\tau,\xi)\Tilde{W}(d\tau,d\xi).
    \end{equation}
    Then, $\{ v(t,x) : (t,x) \in \R_+ \times \R \}$ has the same law as $\{u(t,x): (t,x) \in \R_+\times \R\}$, and $v(A,t,x)$ is a centered Gaussian random field such that $v(A,\cdot)$ and $v(B,\cdot)$ are independent whenever $A$ and $B$ are disjoint.
    Moreover, there exists $a_0 > 0$ such that for all $a_0 \le a < b \le \infty$, $s,t \in [0,T]$ and $x,y \in \R$, we have 
    \begin{align*}
        &\E[\{v([a,b),t,x)-v([a,b),s,y)-v(t,x)+v(s,y)\}^2]^{1/2}\\
        &\lesssim a(|t-s|+|x-y|)+b^{-1}
    \end{align*}
    and
    \begin{align*}
        \E\left[ (v([0,a_0],t,x) - v([0,a_0],s,y) \right]^{1/2} \lesssim |t-s| + |x-y|.
    \end{align*}
\end{lemma}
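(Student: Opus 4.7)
The plan is to view $v$ as a Fourier-side representation of $u_C$, deduce the distributional properties from elementary facts about the space-time white noise in frequency, and then reduce the two moment bounds to estimates on the symbol $\tilde{\Gamma}(\tau,\xi)$ via Plancherel. Concretely, I would interpret $\tilde{W}(d\tau, d\xi)$ as the spatial-temporal Fourier transform of $W$: the unique complex-valued Gaussian random measure satisfying the Hermitian symmetry $\tilde{W}(-d\tau,-d\xi)=\overline{\tilde{W}(d\tau,d\xi)}$, with orthogonal increments over disjoint Borel sets and variance measure proportional to Lebesgue. With this interpretation, for any real $f \in L^2(\R^2)$, the Wiener integral $\int f\,dW$ agrees in law with $\Re \int \tilde{F}[f]\,d\tilde{W}$, so applying this with $f(s,y)=\Gamma(t-s,x-y)\1_{[0,t]}(s)$ identifies $v$ with $u_C$ (in law). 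Centered Gaussianity of $v(A,t,x)$ is then automatic, and independence of $v(A,\cdot),v(B,\cdot)$ for disjoint $A,B$ follows because the preimages $\{(\tau,\xi):|\tau|\vee|\xi|\in A\}$ and $\{(\tau,\xi):|\tau|\vee|\xi|\in B\}$ are disjoint in $\R^2$.

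For the $L^2$ increment bounds, I would apply Plancherel to write, for any Borel $A\subset[0,\infty)$,
\[
\E[(v(A,t,x)-v(A,s,y))^2] \;\propto\; \iint_{|\tau|\vee|\xi|\in A}|\Phi_{t,x}(\tau,\xi)-\Phi_{s,y}(\tau,\xi)|^2\,d\tau\,d\xi,
\]
where $\Phi_{t,x}(\tau,\xi):=\tilde F[\Gamma(t-\cdot,x-\circ)\1_{[0,t]}(\cdot)](\tau,\xi)=e^{-i\xi x}\int_0^t \hat{\Gamma}(s,\xi)e^{-i\tau s}ds$ and $\hat{\Gamma}(s,\xi)=e^{-as/2}\sin(s|\xi|)/|\xi|$ from \eqref{G: critical}. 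Note also that the full symbol is $\tilde{\Gamma}(\tau,\xi)=((a/2+i\tau)^2+\xi^2)^{-1}$. The first claimed bound is obtained by applying this identity with $A=[0,a)\cup[b,\infty)$ and splitting the integral into low- and high-frequency pieces.

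For the low-frequency piece $|\tau|\vee|\xi|<a$, write the increment as
\[
\Phi_{t,x}-\Phi_{s,y} = (e^{-i\xi x}-e^{-i\xi y})\int_0^t \hat{\Gamma}(\sigma,\xi)e^{-i\tau\sigma}d\sigma + e^{-i\xi y}\int_s^t \hat{\Gamma}(\sigma,\xi)e^{-i\tau\sigma}d\sigma,
\]
use $|e^{-i\xi x}-e^{-i\xi y}|\le |\xi||x-y|$, $|\hat{\Gamma}(\sigma,\xi)|\le \sigma$, to obtain $|\Phi_{t,x}-\Phi_{s,y}|\lesssim (|\xi|+1)(|t-s|+|x-y|)$. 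Integrating the squared bound over the box of area $\sim a^2$ (using $a\ge a_0$ to absorb constants into the $(|\xi|+1)^2$ factor) yields an $L^2$ contribution of order $a^2(|t-s|+|x-y|)$; taking $a=a_0$ in the same estimate proves the second inequality of the lemma.

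For the high-frequency piece $|\tau|\vee|\xi|\ge b$, rewrite $\Phi_{t,x}(\tau,\xi)=e^{-i\xi x}\bigl[\tilde{\Gamma}(\tau,\xi)-\int_t^\infty \hat{\Gamma}(\sigma,\xi)e^{-i\tau\sigma}d\sigma\bigr]$; the tail integral is $O(e^{-at/2})$ uniformly in $\xi$ thanks to the damping, so matters reduce to bounding $\iint_{|\tau|\vee|\xi|\ge b}|\tilde{\Gamma}|^2$, which in turn reduces to analyzing $((\xi^2-\tau^2+a^2/4)^2+a^2\tau^2)^{-1}$. I expect the main obstacle here to be the high-frequency analysis near the characteristic cone $|\xi|=|\tau|$: off the cone the symbol decays like $|\xi^2-\tau^2|^{-2}$, which integrates nicely, but along the cone it only decays like $(a\tau)^{-2}$, and one must exploit the damping constant together with a dyadic/tubular decomposition (a tube of width controlled by $a$ around $|\xi|=|\tau|$ versus its complement) to get the advertised $b^{-2}$ rate. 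Once that tail analysis is carried out, combining low- and high-frequency contributions via the triangle inequality completes the proof.
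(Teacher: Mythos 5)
Your overall framework is the same as the paper's: represent $v$ on the Fourier side, use the Plancherel/isometry identity to convert the second-moment bound into a frequency-space integral of $|\Phi_{t,x}-\Phi_{s,y}|^2$, and split that integral into a low-frequency box $\{|\tau|\vee|\xi|<a\}$ and a high-frequency complement $\{|\tau|\vee|\xi|\ge b\}$. Your treatment of the soft facts (centered Gaussianity, independence over disjoint $A,B$ via disjoint supports in frequency, the identification of $v$ with $u_C$ in law) is correct, and your low-frequency estimate, though crude, does deliver the $a^2(|t-s|+|x-y|)$ term and hence the second inequality of the lemma. The paper does not actually re-prove the lemma; it cites the proof of \cite[Lemma 9.3]{DMX_Polarity} and supplies in place the critically-damped analogues of the two technical ingredients: the pointwise bounds on $F(t,x,\tau,\xi)$ in Lemma \ref{lemma: crit damp FS and grad bd}, which control $|\Phi_{t,x}|$ and $|\Phi_{t,x}-\Phi_{t',x'}|$ in terms of $\frac{1}{1+\frac12|\tau\pm|\xi||}$, and the elementary integral bounds in Lemma \ref{lemma: polarity 9.5}.

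The high-frequency part of your argument has a genuine gap. You write $\Phi_{t,x}=e^{-i\xi x}\bigl[\tilde\Gamma(\tau,\xi)-\int_t^\infty\hat\Gamma(\sigma,\xi)e^{-i\tau\sigma}d\sigma\bigr]$ and claim that since the tail integral is $O(e^{-at/2})$ uniformly in $\xi$, ``matters reduce to bounding $\iint_{|\tau|\vee|\xi|\ge b}|\tilde\Gamma|^2$.'' This is a non-sequitur: a bound that is uniform in $(\tau,\xi)$ (and moreover is $O(1)$ for $t$ near $0$) gives nothing when squared and integrated over the unbounded region $\{|\tau|\vee|\xi|\ge b\}$. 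To dispose of the tail term you would need a bound that decays in $(\tau,\xi)$; computing it explicitly (via the complex exponential decomposition of $\sinh$) shows that the tail has exactly the same $\frac{1}{|\xi|}\bigl(\frac{1}{1+|\tau-|\xi||}+\frac{1}{1+|\tau+|\xi||}\bigr)$ decay as $\tilde\Gamma$ and $\Phi_{t,x}$ themselves, so nothing is gained by subtracting it off. The difficulty you then flag -- the slow decay in a tube around the light cone $|\tau|=|\xi|$ and the need for a careful ``tubular decomposition'' -- is precisely the content of the paper's Lemmas \ref{lemma: crit damp FS and grad bd}(a) and \ref{lemma: polarity 9.5}(b), and you leave it undone, stating only that ``once that tail analysis is carried out'' the proof completes. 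Since that is the hard part of the lemma, the proposal as written does not constitute a proof; to close it you should either establish the pointwise bound of Lemma \ref{lemma: crit damp FS and grad bd}(a) directly on $\Phi_{t,x}$ (bypassing the $\tilde\Gamma$ detour, which buys you nothing) and then invoke or reprove the integral estimate of Lemma \ref{lemma: polarity 9.5}(b), which is exactly the paper's route.
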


The random field $v$ in \eqref{eq: v} is called the harmonizable representation of $u$ \cite{LX23}.
We will need some elementary lemmas for the proof of Lemma \ref{lemma: Polarity 8.3}.
\begin{lemma}\label{lemma: crit damp FS and grad bd}
    Define $F(t,x,\tau,\xi):=\tilde{F}[\Gamma(t-\cdot,x-\circ)\1_{[0,t]}(\cdot)](\tau,\xi)$. For any $x,x'\in \R$, $0<t,t'<T$, the following holds.
    \begin{enumerate}[label=(\alph*)]
        \item $|F(t,x,\tau,\xi)|\lesssim \frac{1}{1+|\xi|}\left(\frac{1}{1+\frac{1}{2}|\tau+|\xi||}+\frac{1}{1+\frac{1}{2}|\tau-|\xi||}\right)$.
        \item Given another $0<t'<T$ and $x'\in \R$, $$|F(t,x,\tau,\xi)-F(t',x',\tau,\xi)|\lesssim (|t-t'|+|x-x'|)\left[ \frac{1}{1+\frac{1}{2}|\tau+|\xi||}+\frac{1}{1+\frac{1}{2}|\tau-|\xi||}\right].$$
    \end{enumerate}
\end{lemma}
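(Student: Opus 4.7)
The plan is to compute $F$ in closed form from the explicit critical-damping kernel $\Gamma(r,y)=\tfrac{e^{-ar/2}}{2}\1_{\{|y|<r\}}$, then rewrite it in a form that cancels the apparent $\xi=0$ singularity, and finally read off both bounds from that form. Carrying out the $y$-integral gives a $\sin(\xi(t-s))/\xi$ factor, and then evaluating the $s$-integral (after splitting $\sin(\xi r)=(e^{i\xi r}-e^{-i\xi r})/(2i)$) yields
\[
F(t,x,\tau,\xi) \;=\; \frac{e^{-i\xi x}}{2i\xi}\left[\frac{e^{(i\xi-a/2)t}-e^{-i\tau t}}{\alpha_+} - \frac{e^{-(i\xi+a/2)t}-e^{-i\tau t}}{\alpha_-}\right], \qquad \alpha_\pm := -\tfrac{a}{2}+i(\tau\pm\xi).
\]
The $(2i\xi)^{-1}$ prefactor is deceptive: the bracket vanishes at $\xi=0$. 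The key algebraic step is to isolate the $e^{-i\tau t}$ contribution using $\tfrac{1}{\alpha_+}-\tfrac{1}{\alpha_-}=-\tfrac{2i\xi}{\alpha_+\alpha_-}$ and then combine the remaining two terms over a common denominator $\alpha_+\alpha_-$; after standard $\sin$/$\cos$ manipulations one obtains the singularity-free representation
\[
F(t,x,\tau,\xi) \;=\; \frac{e^{-i\xi x}}{\alpha_+\alpha_-}\left[e^{-at/2}\bigl((i\tau-\tfrac{a}{2})\tfrac{\sin(\xi t)}{\xi}-\cos(\xi t)\bigr) + e^{-i\tau t}\right].
\]

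Two uniform facts, both crucially using $a>0$, drive the rest: $|\alpha_\pm|=\sqrt{a^2/4+(\tau\pm\xi)^2}\gtrsim 1+|\tau\pm\xi|$, and $\bigl|\tfrac{\sin(\xi t)}{\xi}\bigr|\lesssim \tfrac{1}{1+|\xi|}$ uniformly on $t\in[0,T]$. For (a), these give a numerator bound $(|\tau|+\tfrac{a}{2})\cdot\tfrac{1}{1+|\xi|}+2\lesssim \tfrac{1+|\tau|+|\xi|}{1+|\xi|}$ and hence
\[
|F(t,x,\tau,\xi)| \;\lesssim\; \frac{1+|\tau|+|\xi|}{(1+|\xi|)(1+|\tau+\xi|)(1+|\tau-\xi|)}.
\]
Invoking the elementary identity $|\tau+\xi|+|\tau-\xi|=2\max(|\tau|,|\xi|)\ge|\tau|+|\xi|$ converts the right-hand side into $\lesssim \tfrac{1}{1+|\xi|}\bigl(\tfrac{1}{1+|\tau+\xi|}+\tfrac{1}{1+|\tau-\xi|}\bigr)$, which is the claim (noting that $\{|\tau+|\xi||,|\tau-|\xi||\}=\{|\tau+\xi|,|\tau-\xi|\}$).

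For (b), I would split via $|F(t,x,\tau,\xi)-F(t',x',\tau,\xi)|\le|F(t,x,\tau,\xi)-F(t,x',\tau,\xi)|+|F(t,x',\tau,\xi)-F(t',x',\tau,\xi)|$. Since $F$ depends on $x$ only through the prefactor $e^{-i\xi x}$, the spatial increment equals $(e^{-i\xi x}-e^{-i\xi x'})$ times the rest, and pairing $|e^{-i\xi x}-e^{-i\xi x'}|\le\min(|\xi||x-x'|,2)$ with the bound (a) on the remaining factor cancels the $1/(1+|\xi|)$ to deliver $\lesssim |x-x'|\bigl(\tfrac{1}{1+|\tau+\xi|}+\tfrac{1}{1+|\tau-\xi|}\bigr)$. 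The temporal increment requires bounding $|N(t,\tau,\xi)-N(t',\tau,\xi)|$, where $N$ is the bracket in the nice representation. Termwise differentiation gives $|\partial_t[e^{-at/2}\tfrac{\sin(\xi t)}{\xi}]|\lesssim 1$, $|\partial_t[e^{-at/2}\cos(\xi t)]|\lesssim 1+|\xi|$, and $|\partial_t e^{-i\tau t}|=|\tau|$, so $|N(t)-N(t')|\lesssim |t-t'|(1+|\tau|+|\xi|)$; dividing by $|\alpha_+\alpha_-|\gtrsim(1+|\tau+\xi|)(1+|\tau-\xi|)$ and again invoking the $\max$-identity completes the estimate.

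The main obstacle is the algebraic manipulation in the first paragraph: naive bounds on the raw formula blow up as $\xi\to 0$, and the specific grouping via $\tfrac{1}{\alpha_+}-\tfrac{1}{\alpha_-}=-\tfrac{2i\xi}{\alpha_+\alpha_-}$ is the one non-obvious ingredient. Once in hand, both parts of the lemma reduce to a uniform application of $|\alpha_\pm|\gtrsim 1+|\tau\pm\xi|$, $|\sin(\xi t)/\xi|\lesssim 1/(1+|\xi|)$, and $|\tau+\xi|+|\tau-\xi|\ge |\tau|+|\xi|$.
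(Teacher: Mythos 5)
Your proof is correct, and the closed-form expression
\[
F(t,x,\tau,\xi)=\frac{e^{-i\xi x}}{\alpha_+\alpha_-}\left[e^{-at/2}\left((i\tau-\tfrac{a}{2})\tfrac{\sin(\xi t)}{\xi}-\cos(\xi t)\right)+e^{-i\tau t}\right],\qquad \alpha_\pm=-\tfrac{a}{2}+i(\tau\pm\xi),
\]
does follow from the raw formula by the common-denominator manipulation you describe (I checked: the bracket combines to $\alpha_-e^{i\xi t}-\alpha_+e^{-i\xi t}=2i(i\tau-\tfrac{a}{2})\sin(\xi t)-2i\xi\cos(\xi t)$ after writing $\alpha_\pm=\beta\pm i\xi$). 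The route, however, is genuinely different from the paper's. The paper also starts from
\[
F(t,x,\tau,\xi)=\frac{e^{-it\tau+ix\xi}}{2i|\xi|}\bigl[f(t,z_+)-f(t,z_-)\bigr],\qquad f(t,z)=\frac{e^{tz}-1}{z},\quad z_\pm=i\tau-\tfrac{a}{2}\pm i\xi,
\]
but treats the right side as a \emph{divided difference} of a single function $\tilde\phi_t(\tilde y)=(e^{it\tilde y}-1)/\tilde y$ evaluated at $\tilde y_\pm$, shows $|\tilde\phi_t|$ and $|\tilde\phi_t'|\lesssim 1/(1+|y|)$, and then appeals to the divided-difference estimate in Dalang--Mueller--Xiao (their Lemma 9.4) to conclude part (a); for part (b) it computes $\partial_xF=i\xi F$ and $\partial_tF$ and applies the mean value theorem. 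You instead eliminate the apparent $\xi=0$ singularity algebraically (via $\tfrac{1}{\alpha_+}-\tfrac{1}{\alpha_-}=-\tfrac{2i\xi}{\alpha_+\alpha_-}$), obtain the entire-function numerator explicitly, and bound each term by hand. What this buys: a fully self-contained argument (no black-box divided-difference lemma), a clean intermediate bound $|F|\lesssim\frac{1+|\tau|+|\xi|}{(1+|\xi|)(1+|\tau+\xi|)(1+|\tau-\xi|)}$ from which both parts drop out via the identity $|\tau+\xi|+|\tau-\xi|=2\max(|\tau|,|\xi|)$, and a very transparent treatment of the spatial increment (the $x$-dependence factors out as $e^{-i\xi x}$). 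Both arguments rely essentially on $a>0$ to get $|\alpha_\pm|\gtrsim 1+|\tau\pm\xi|$. The trade-off is that your computation requires carrying out and simplifying the $s$-integral in closed form, which the divided-difference viewpoint sidesteps. One small presentational point: when you assert $|\sin(\xi t)/\xi|\lesssim 1/(1+|\xi|)$ ``uniformly on $t\in[0,T]$,'' it is worth noting the implicit constant depends on $T$ (via $\min(T,1/|\xi|)\le(1+T)/(1+|\xi|)$), which is consistent with the lemma's hypothesis $0<t,t'<T$.
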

\begin{proof}
    This lemma is the analogue of \cite[Lemma 9.4]{DMX_Polarity} (with the results listed in reverse for convenience), and the proof will go similarly.
    By an elementary calculation, we have \begin{align}\label{eq: F}
        F(t,x,\tau,
        \xi)=& \frac{e^{-it\tau+ix\xi}}{2i|\xi|}[f(t,z_+)-f(t,z_-)],\text{ where }\nonumber\\
        f(t,z):=&\frac{e^{tz}-1}{z},\quad z_{\pm}:=i\tau-\frac{a}{2}\pm i\xi.
    \end{align}
    For the proof of part (a),  \eqref{eq: F} gives $$|F(t,x,\tau,\xi)|=\frac{|\phi_t(y_+)-\phi_t(y_-)|}{2|y_+-y_-|},\text{ where }\phi_t(y)=\frac{e^{it(y+i\frac{a}{2})-1}}{i(y+i\frac{a}{2})}.$$
    Observe that since $y_+-y_-=(y_++i\frac{a}{2})-(y_-+i\frac{a}{2})$, we have by the substitution $\tilde{y}_\pm=y_\pm+i\frac{a}{2}$ $$|F(t,x,\tau,\xi)|=\frac{|\tilde{\phi}_t(\tilde{y}_+)-\tilde{\phi}_t(\tilde{y}_-)|}{|\tilde{y}_+-\tilde{y}_-|},\text{ where }\tilde{\phi}_t(\tilde{y})=\frac{e^{it\tilde{y}}-1}{\tilde{y}}.$$
    Note here that $\tilde{\phi}_t$ is a map $\C\supset\R+i\frac{a}{2}\to \C$, so we need not worry about $\tilde{y}=0$. However, it is easy to show using elementary arguments that $$\sup_{(t,\tilde{y})\in [0,T]\times \R+i\frac{a}{2}}\max(\tilde{\phi}_t(\tilde{y}),\tilde{\phi}_t'(\tilde{y}))\lesssim \frac{1}{1+|\tilde{y}|}\lesssim \frac{1}{1+|y|}.$$
    Thus, the elementary arguments involving the mean value theorem in the proof of \cite[Lemma 9.4(b)]{DMX_Polarity}) give us $$\frac{|\tilde{\phi}(\tilde{y}_+)-\tilde{\phi}(\tilde{y}_-)|}{2|y_+-y_-|}\lesssim\frac{1}{1+2|y_+-y_-|}\left[\frac{1}{1+|y_+|}+\frac{1}{1+|y_-|}\right],$$ which completes the proof of part (a).
    For part (b), we write \begin{align*}
        |F(t,x,\tau,\xi)-F(t',x',\tau,\xi)|=&|F(t,x,\tau,\xi)-F(t',x,\tau,\xi)+F(t',x,\tau,\xi)-F(t',x',\tau,\xi)|\\
        \leq& |F(t,x,\tau,\xi)-F(t',x,\tau,\xi)|+|F(t',x,\tau,\xi)-F(t',x',\tau,\xi)|
    \end{align*}
    from which we will apply the mean value theorem in the first and second variables of $F$, which will conclude the proof. For the space increment, we compute $$|\partial_x F|=|i\xi F|\leq \left[ \frac{1}{1+\frac{1}{2}|\tau+|\xi||}+\frac{1}{1+\frac{1}{2}|\tau-|\xi||}\right]$$
    Where the inequality is an application of part (a). Then the mean value theorem gives us \begin{equation}\label{est: F space inc}
        |F(t',x,\tau,\xi)-F(t',x',\tau,\xi)|\leq |x-x'|\left[ \frac{1}{1+\frac{1}{2}|\tau+|\xi||}+\frac{1}{1+\frac{1}{2}|\tau-|\xi||}\right].
    \end{equation}
    For the time increment, we use $\partial_tf(t,z)=e^{tz}$ to compute \begin{align*}
        \partial_t F&=-i\tau F+\frac{e^{-it\tau+ix\xi}}{2i|\xi|}[\partial_tf(t,z_+)-\partial_t f(t,z_-)]
        = -i\tau F+\frac{e^{-it\tau+ix\xi}}{2i|\xi|}[e^{tz_+}-e^{tz_-}]\\
        &=\frac{e^{-it\tau+ix\xi}}{2i|\xi|}\left[-i\tau(f(t,z_+)-f(t,z_-))+e^{tz_+}-e^{tz_-}\right]\\
        &=\frac{e^{-it\tau+ix\xi}}{2i|\xi|}[-i\tau(f(t,z_+)-f(t,z_-))+z_+f(t,z_+)-1-(z_-f(t,z_+)-1)]\\
        &=\frac{e^{-it\tau+ix\xi}}{2i|\xi|}[(z_+-i\tau)f(t,z_+)-(z_--i\tau)f(t,z_-)]\\
        &=\frac{e^{-it\tau+ix\xi}}{2i|\xi|}[(z_+-i\tau)f(t,z_+)-(z_--i\tau)f(t,z_+)+(z_--i\tau)f(t,z_+)-(z_--i\tau)f(t,z_-)]
    \end{align*}
    The fact $|\xi|=|z_+-z_-|$ gives us \begin{align*}
        |\partial_tF| &\leq \frac{|z_+-z_-||f(t,z_+)|}{2|z_+-z_-|}+|z_--i\tau|\frac{|f(t,z_+)-f(t,z_-)|}{2|z_+-z_-|}\\
        &\leq \frac{1}{2}|f(t,z_+)|+C\frac{|f(t,z_+)-f(t,z_-)|}{2|z_+-z_-|}\\
        &= \frac{1}{2}\frac{|(1-e^{-\frac{at}{2}})+e^{i(\tau+|\xi|)}|}{|i(\tau+|\xi|)+\frac{a}{2}|}+C\frac{|f(t,z_+)-f(t,z_-)|}{2|z_+-z_-|}\\
        &\lesssim \frac{1}{1+\frac{1}{2}|\tau+|\xi||}+\frac{1}{1+\frac{1}{2}|\tau-|\xi||}.
    \end{align*}
    For the last inequality above, the second term is again an application of part (a), and the argument for the first term is elementary. This gives us $$|F(t,x,\tau,\xi)-F(t',x,\tau,\xi)|\lesssim |t-t'|\left[ \frac{1}{1+\frac{1}{2}|\tau+|\xi||}+\frac{1}{1+\frac{1}{2}|\tau-|\xi||}\right],$$
    which finishes the proof.
\end{proof}

\begin{lemma}\label{lemma: polarity 9.5}
    \begin{enumerate}[label=(\alph*)]
        \item For any $a,\alpha>0$, $$\iint_{\max (|\theta|,|\zeta|)\leq 2a^{\frac{1}{\alpha}}}\frac{1}{1+\frac{1}{4}(\theta-\zeta)^2}+\frac{1}{1+\frac{1}{4}(\theta+\zeta)^2}d\theta d\zeta\lesssim a^{\frac{1}{\alpha}}.$$
        \item For sufficiently large $b$, $$\iint_{\max (|\theta|,|\zeta|)>b^{\frac{1}{\alpha}},r>0}\left[\frac{1}{1+\frac{1}{4}(\theta-\zeta)^2}+\frac{1}{1+\frac{1}{4}(\theta+\zeta)^2}\right]\frac{1}{1+\zeta^2}d\theta d\zeta\lesssim b^{-2}.$$
    \end{enumerate}
\end{lemma}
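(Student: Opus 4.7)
Both parts of the lemma call for the linear change of variables $(u,v)=(\theta-\zeta,\theta+\zeta)$, whose Jacobian is $\tfrac{1}{2}$ and which decouples the kernel into a sum of single-variable factors $\frac{1}{1+u^2/4}+\frac{1}{1+v^2/4}$. Under this substitution the weight $|\theta-\zeta|$ becomes $|u|$, the square $\{\max(|\theta|,|\zeta|)\le 2a^{1/\alpha}\}$ becomes the diamond $\{|u|+|v|\le 4a^{1/\alpha}\}$, and the tail region $\{\max(|\theta|,|\zeta|)>b^{1/\alpha}\}$ becomes $\{|u|+|v|>2b^{1/\alpha}\}$, which in particular forces $\max(|u|,|v|)>b^{1/\alpha}$. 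Once the integrands are expressed as sums of products of one-variable functions, Fubini together with elementary one-dimensional calculus finishes the job.

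For part (a), I would split the integral into two pieces by linearity of the kernel. For the term $\frac{|u|}{1+u^2/4}$, I integrate $v$ first over the slab determined by the diamond, whose length is bounded by $8a^{1/\alpha}$, and then integrate the resulting one-variable function of $u$ using that $\frac{|u|}{1+u^2/4}$ is uniformly bounded on $\mathbb{R}$. For the cross term $\frac{|u|}{1+v^2/4}$, I reverse the order: $\int_\mathbb{R}\frac{dv}{1+v^2/4}\lesssim 1$ handles the $v$ factor, leaving a one-dimensional integral of $|u|$ over $\{|u|\le 4a^{1/\alpha}\}$ to be bounded against the domain constraint, yielding the claimed linear-in-$a^{1/\alpha}$ estimate.

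For part (b), the extra weight $\frac{\zeta}{1+\zeta^2}$ rewrites in the new variables as $\frac{(v-u)/2}{1+(v-u)^2/4}\lesssim \frac{1}{1+|v-u|}$, which decays at infinity. I partition the tail region into the two cases $|u|>b^{1/\alpha}$ and $|v|>b^{1/\alpha}$ and apply the tail estimate $\int_R^\infty \frac{ds}{1+s^2/4}\lesssim R^{-1}$ twice: once to exploit the decay of the kernel in the variable that is large, and once to exploit the decay of the $\zeta$-weight in the complementary variable. The product of the two $R^{-1}$ factors at $R=b^{1/\alpha}$ produces the required $b^{-2}$ bound, provided $b$ is large enough for the tail estimate to be valid.

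The main obstacle is bookkeeping in the cross terms, where the kernel decays in one variable while the weight depends on the other, so that the integrand is not a tensor product and one must use the domain constraint $|u|+|v|\lesssim a^{1/\alpha}$ (or its tail counterpart) to close the estimate. The remaining computations are elementary one-dimensional integrals in the same spirit as those carried out in Lemmas \ref{lem:variance bound}--\ref{lem: space increment bound}, so I do not expect any new technical difficulty beyond careful splitting of the integration region.
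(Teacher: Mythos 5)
The paper's ``proof'' of this lemma is a single sentence: it is cited verbatim as \cite[Lemma~9.5]{DMX_Polarity} with $\beta=1$, and no computation is supplied. So your attempt at a direct, self-contained proof is by construction a different route from the paper's — but I cannot call it correct, because the arithmetic of the exponents does not close.

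In part (a), after your change of variables $(u,v)=(\theta-\zeta,\theta+\zeta)$ the domain becomes the diamond $\{|u|+|v|\le 4a^{1/\alpha}\}$ (call its radius $M=4a^{1/\alpha}$), and you claim that integrating $v$ over a slab of length $\lesssim M$ and then using the boundedness of $u\mapsto |u|/(1+u^2/4)$ gives $\lesssim M$. It does not: bounding the one-variable factor by a constant and integrating over $|u|\le M$ gives $\lesssim M^2$, and using instead the decay $|u|/(1+u^2/4)\lesssim 1/|u|$ only improves this to $\lesssim M\log M$. Similarly, for the cross term you replace $\int_{|v|\le M-|u|}$ by $\int_{\R}\frac{dv}{1+v^2/4}\lesssim 1$, which leaves $\int_{|u|\le M}|u|\,du\propto M^2$; there is no remaining ``domain constraint'' to invoke, because you have already decoupled the variables. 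In part (b), you correctly observe that $\max(|u|,|v|)>b^{1/\alpha}$, but then the product of two tail estimates at radius $R=b^{1/\alpha}$ yields $R^{-2}=b^{-2/\alpha}$, not the claimed $b^{-2}$; the two exponents agree only when $\alpha=1$, and you give no argument for the general case. On top of this, the cross-term bookkeeping for (b) is glossed over: when the large variable is, say, $u$, the weight $\zeta/(1+\zeta^2)=\tfrac{(v-u)/2}{1+(v-u)^2/4}$ depends on both variables, so ``decay in the complementary variable'' is not a clean one-dimensional tail integral, and a real argument would need to split according to whether $|v-u|$ is comparable to $|u|$ or not.

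Separately, I would flag that the lemma as transcribed in the paper looks corrupted (the stray constraint ``$r>0$'' in part (b) involves a variable that appears nowhere else, and taken at face value the $\theta$-integral in (b) fails to converge over the unbounded region $\{|\theta|>b^{1/\alpha}\}$). A blind direct proof of this exact statement is therefore unlikely to succeed; the honest move here is the one the paper makes, namely to trace the estimate back to \cite[Lemma~9.5]{DMX_Polarity} and verify that the substitution $\beta=1$ produces the form actually used in the proof of Lemma~\ref{lemma: Polarity 8.3}.
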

\begin{proof}
    This lemma is \cite[Lemma 9.5]{DMX_Polarity} with $\beta=1$.
\end{proof}

\begin{proof}[Proof of Lemma \ref{lemma: Polarity 8.3}]
    This lemma is an analogue of \cite[Lemma 9.3]{DMX_Polarity}, and the original proof can be re-written here with usage of Lemmas \ref{lemma: crit damp FS and grad bd} and \ref{lemma: polarity 9.5} (with $\alpha=1/2$) in the appropriate places with no changes in the current setting.
\end{proof}

\begin{lemma}\label{lem:SLND}
    Fix $w_0\ge 0$ and $0<a_1<a_2$, and consider the line segment $I$ defined by \eqref{line:I}.
    Then, there exists $C>0$ such that 
    \begin{align}\label{E:SLND}
        \Var(u(t,x)\mid u(t_1,x_1),\dots, u(t_n,x_n)) \ge C \min_{1\le i \le n}(|t-t_i|+|x-x_i|)
    \end{align}
    uniformly for all $n \in \N_+$ and $(t,x),(t_1,x_1),\dots,(t_n,x_n) \in I$ with $\max_{1 \le i \le n} t_i \le t$.
    In particular,
    \begin{align}\label{E:u-u:LB}
        \Var(u(t,x)-u(s,y)) \ge C(|t-s|+|x-y|)
    \end{align}
    uniformly for all $(t,x),(s,y) \in I$.
\end{lemma}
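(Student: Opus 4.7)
My plan is to bypass the Fourier machinery of Lemmas \ref{lemma: Polarity 8.3}--\ref{lemma: polarity 9.5} entirely by working directly in characteristic coordinates, exploiting the simple compactly-supported form of the critical-damping fundamental solution $\Gamma(t,x) = \tfrac{1}{2} e^{-at/2} \1_{\{|x|<t\}}$. Perform the rotation $(s,y) \mapsto (w',z') = ((s-y)/\sqrt{2}, (s+y)/\sqrt{2})$; because this is an isometry of $\R^2$, the pushed-forward random measure $\tilde{W}$ is again a space-time white noise. In these coordinates the support condition $|x-y|\le t-s$ becomes simply $w'\le w$ and $z'\le z$, while $s\ge 0$ becomes $w'+z'\ge 0$. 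The exponential weight factors as $e^{-a(t-s)/2} = e^{-a(w+z)/(2\sqrt{2})} \cdot e^{a(w'+z')/(2\sqrt{2})}$, so the mild solution admits the factorization
\begin{equation*}
    u(t,x) = g(z)\, M(w,z), \qquad g(z) := \tfrac{1}{2}\, e^{-a(w+z)/(2\sqrt{2})}, \qquad M(w,z) := \int_{R(w,z)} e^{a(w'+z')/(2\sqrt{2})}\, \tilde{W}(dw', dz'),
\end{equation*}
where $R(w,z) := \{(w',z') : w'\le w,\ z'\le z,\ w'+z'\ge 0\}$ is a triangle.

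Specialize to points on $I$: all share the $w$-coordinate $w_0$, and their $z$-coordinates lie in $[a_1,a_2]$; the hypothesis $t_i\le t$ translates to $z_i\le z$. Since each $g(z_i)$ is a positive deterministic constant, conditioning on $\{u(t_i,x_i)\}$ is equivalent to conditioning on $\{M(w_0, z_i)\}$. The key observation is that $z\mapsto M(w_0, z)$ is a Gaussian process with \emph{independent increments}: for $z_1<z_2$, the difference $M(w_0, z_2) - M(w_0, z_1)$ is a $\tilde W$-integral over $R(w_0, z_2)\setminus R(w_0, z_1)$, disjoint from $R(w_0, z_1)$. Setting $z_* := \max_i z_i$, the Markov property for such processes gives
\begin{equation*}
    \Var(M(w_0, z) \mid M(w_0, z_i),\ i\le n) = \Var(M(w_0, z) - M(w_0, z_*)) = \int_{z_*}^z \int_{-z'}^{w_0} e^{a(w'+z')/\sqrt{2}}\, dw'\, dz',
\end{equation*}
and an elementary calculation of the inner integral yields a lower bound $C(z - z_*)$ for some $C>0$ depending only on $w_0, a_1, a_2, a$. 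Multiplying by $g(z)^2$, which is bounded below on $[a_1,a_2]$, and using $|t-t_i|+|x-x_i| = \sqrt{2}\,|z-z_i|$ for points on $I$, we obtain \eqref{E:SLND}. The inequality \eqref{E:u-u:LB} then follows from the $n=1$ case together with the Gaussian identity $\Var(u(t,x)-u(s,y)) \ge \Var(u(t,x) \mid u(s,y))$, after relabeling so that the earlier time plays the role of the condition.

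The main obstacle I anticipate is not the lower bound itself, which is a classical Markov computation once the characteristic-coordinate factorization is in place, but rather the preliminaries: (i) formally justifying that the isometric pushforward of Walsh white noise is again a white noise with the same covariance structure, and (ii) carefully verifying that the indicator in $\Gamma$ translates to exactly $R(w,z)$, including the boundary $w'+z'\ge 0$ arising from $s\ge 0$ (which is what makes $R(w,z)$ a triangle rather than a rectangle and constrains the lower limit of integration over $w'$). Once these are settled, no Fourier-side SLND argument in the spirit of \cite{DMX_Polarity} is needed, which I find considerably more natural from the PDE viewpoint.
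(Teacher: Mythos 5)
Your proof is correct, and the core mechanism is the same as the paper's: white noise restricted to disjoint regions is independent, and the light cone of $(t,x)$ contains a slab (the region between the forward characteristics through $(t_n,x_n)$ and through $(t,x)$) disjoint from the light cones of all the conditioning points. Where you diverge is in the packaging. The paper works directly with $\Var(X\mid X_1,\dots,X_n)=\inf_{c_i}\Var(X-\sum c_i X_i)$ and exhibits an explicit decomposition $u=u_0+u_1$ with $u_0$ a white-noise integral over the disjoint slab, so that $\Var(u-\sum c_iu_i)\ge\Var(u_0)$, which is then computed by hand. You instead rotate to characteristic coordinates once and for all, peel off the deterministic factor $g(z)=\tfrac12 e^{-a(w_0+z)/(2\sqrt2)}$ (constant in $z_i$ after conditioning), and observe that along $I$ the remaining process $z\mapsto M(w_0,z)$ has \emph{independent increments} in $z$; the Markov property then gives the conditional variance \emph{exactly} as $\Var(M(w_0,z)-M(w_0,z_*))$ with $z_*=\max_i z_i$, rather than merely a lower bound. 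This is cleaner and arguably more structural — the independent-increments observation also makes transparent why the estimate cannot degrade as $n\to\infty$, since the answer depends only on the single nearest conditioning point. One small correction to your motivating remarks: the paper's own proof of this lemma is already entirely in physical space and does not invoke Lemmas~\ref{lemma: Polarity 8.3}--\ref{lemma: polarity 9.5}; the Fourier machinery in that section is used to verify Assumptions~2.1 and~2.3 of \cite{LX23} (needed for the LIL and modulus-of-continuity proofs), not the local nondeterminism bound. So you are not bypassing a Fourier argument for this particular lemma, but your route is nonetheless a genuinely different and somewhat tidier presentation. Two minor points to tighten if you write this out fully: your $g$ depends on both $w$ and $z$, so the notation $g(z)$ is only legitimate because $w=w_0$ is frozen on $I$ — say so explicitly; and for \eqref{E:u-u:LB}, you should note that $\Var(X-Y)\ge\inf_c\Var(X-cY)=\Var(X\mid Y)$ holds for any jointly Gaussian pair, which is what makes the ``relabel so the earlier time conditions'' step legitimate for either ordering.
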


\begin{proof}
    Recall the basic property that
    \begin{align*}
        \Var(X\mid X_1,\dots, X_n) = \inf_{c_1,\dots,c_n \in \R} \Var \left( X - \sum_{i=1}^n c_i X_i \right)
    \end{align*}
    for any centered Gaussian vector $(X,X_1,\dots, X_n)$.
    Also, note that $|t-s|+|x-y| = 2|t-s|$ for any pair of points $(t,x)$, $(s,y)$ on the line segment $I$.
    Hence, it suffices to show the existence of $C>0$ such that
    \begin{align*}
        \Var\left( u(t,x) - \sum_{i=1}^n c_i u(t_i,x_i) \right) 
        \ge C (|t-t_n|+|x-x_n|)
    \end{align*}
    uniformly for all $n \in \N_+$, $c_1,\dots, c_n \in \R$, and $(t,x),(t_1,x_1),\dots,(t_n,x_n) \in I$ with $t_1\le \cdots \le t_n \le t$.
    To this end, we define
    \begin{align*}
        &u_0(t,x,t_n,x_n) = \int_0^{t_n} \int_{x_n+t_n-s}^{x+t-s} \Gamma(t-s,x-y) W(ds, dy), \\
        &u_1(t,x,t_n,x_n) = u(t,x)-u_0(t,x,t_n,x_n).
    \end{align*}
    Note that $u_0(t,x,t_n,x_n)$ is a Wiener integral over $A:=\{(s,y): 0 < s < t, x_n+t_n-s < t < x+t-s\}$, while
    $u_1(t,x,t_n,x_n), u(t_1,x_1), \dots, u(t_n,x_n)$ are Wiener integrals over regions that are all contained in $B:=\{ (s,y) : 0<s<t_n, x_n-t_n+s < y < x_n+t_n-s \} \cup \{ (s,y) : t_n<s<t, x-t+s<y<x+t-s \}$ which is disjoint from $A$.
    Hence, we may use independence property of the white noise $\dot{W}$ to see that
    \begin{align*}
        &\Var\left( u(t,x) - \sum_{i=1}^n c_i u(t_i,x_i) \right) \\
        &= \Var\left( u_1(t,x,t_n,x_n) - \sum_{i=1}^n c_i u(t_i,x_i)\right) + \Var(u_0(t,x,t_n,x_n))
        \ge \Var(u_0(t,x,t_n,x_n)).
    \end{align*}
    By It\^o-Walsh isometry,
    \begin{align*}
        \Var(u_0(t,x,t_n,x_n))
        = \frac14 \int_0^{t_n} \int_{x_n+t_n-s}^{x+t-s} e^{-a(t-s)} dy ds \ge C (t+x - t_n-x_n),
    \end{align*}
    since $w_0\le t_n\le t$ and $x_n \le x$. This completes the proof.
\end{proof}



\begin{proof}[Proof of Theorem \ref{th:lil}]
    Fix $w_0\ge 0$ and $0<a_1<a_2$.
    Recall \eqref{w,z} and consider the process $\tilde{u}$ defined by
    \begin{align}\label{E:tilde:u}
        \tilde{u}(z) = u\left( \tfrac{w_0+z}{\sqrt{2}}, \tfrac{-w_0+z}{\sqrt{2}} \right), \quad z \in [a_1,a_2].
    \end{align}
    Thanks to Lemma \ref{lemma: Polarity 8.3} and \eqref{E:u-u:LB} above, Assumptions 2.1 and 2.3 in \cite{LX23} are satisfied for $\tilde{u}$.
    Therefore, we may apply Theorem 5.2 of \cite{LX23} to obtain the desired LIL directly.
\end{proof}




\begin{proof}[Proof of Theorem \ref{th:mc}]
    Again, Assumptions 2.1 and 2.3 of \cite{LX23} are satisfied for $\tilde{u}$ defined in \eqref{E:tilde:u}. 
    Here, we do not have the exact form of strong local nondeterminism (SLND) in Assumption 2.2 of \cite{LX23}, but \eqref{E:SLND} in Lemma \ref{lem:SLND} above shows that $\tilde{u}$ still satisfies one-sided SLND in the sense that
    \begin{align}\label{E:1-SLND}
        \Var(\tilde{u}(z)\mid \tilde{u}(z_1),\dots,\tilde{u}(z_n)) \ge C \min_{1\le i \le n}(z-z_i)
    \end{align}
    uniformly for all $n \in \N_+$ and $z,z_1,\dots,z_n \in [a_1,a_2]$ with $\max_{1\le i \le n} z_i \le z$.
    It is enough to use \eqref{E:1-SLND} in place of Assumption 2.2 in \cite{LX23} to repeat the proof of Theorem 6.1 of \cite{LX23} (where $Q=2$) and obtain the desired exact uniform modulus of continuity result \eqref{E:mc} with $\sqrt{4c_2}\le K_J \le \sqrt{2}c_1$, where $c_1$ is the constant in \eqref{E:u-u} and $c_2$ is the constant in \eqref{E:1-SLND}.
    Both constants depend only on $(w_0,a_1,a_2)$ but not on particular sub-intervals $J$ in $I$.
    This completes the proof.
\end{proof}

\section{Proof of Theorem \ref{th:sim:lil}}
\label{s:5}

We will need the following lemma, which is the analogue of \cite[Lemma 2.4]{LeeXiao22}.

\begin{lemma}\label{lem:rec_incr}
    Fix $T>0$. Then
    \begin{align*}
        \E\left[\left(u(t,x)-u(t-\tfrac{\varepsilon_1}{\sqrt{2}},x+\tfrac{\varepsilon_1}{\sqrt{2}})-u(t-\tfrac{\varepsilon_2}{\sqrt{2}}, x-\tfrac{\varepsilon_2}{\sqrt{2}})+u(t-\tfrac{\varepsilon_1+\varepsilon_2}{\sqrt{2}}, x+\tfrac{\varepsilon_1-\varepsilon_2}{\sqrt{2}})\right)^2\right] \lesssim \varepsilon_1 \varepsilon_2
    \end{align*}
    uniformly for all $t\in [0,T]$, $x\in \R$ and  $0<\varepsilon_1,\varepsilon_2<t$.
\end{lemma}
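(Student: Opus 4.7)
\textbf{Proof plan for Lemma \ref{lem:rec_incr}.}

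The plan is to work in the critical damping case (which is the only case that matters by the reduction in Section 3) and exploit the explicit formula $\Gamma(t,x) = \tfrac{1}{2} e^{-at/2} \1_{\{|x|<t\}}$. The key observation is that the rectangular increment in $(t,x)$ described by the lemma is precisely the rectangular increment of $u$ at the four corners of a $\varepsilon_1 \times \varepsilon_2$ rectangle in the characteristic coordinates $(w,z)$ of \eqref{w,z}. I will change variables to these coordinates and carry out an It\^o--Walsh second moment calculation directly.

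First I would perform the change of variables $(s,y) \mapsto (w', z') = ((s-y)/\sqrt{2}, (s+y)/\sqrt{2})$, which has unit Jacobian. Under this change, the support $\{(s,y) : 0 \le s, |x-y| \le t-s\}$ of $\Gamma(t-s, x-y)$ becomes the rectangle $[0,w] \times [0,z]$, and $\Gamma(t-s, x-y) = \tfrac{1}{2} e^{-\alpha(w+z)} e^{\alpha(w'+z')}$ with $\alpha = a/(2\sqrt{2})$. Writing $\psi(w',z') = e^{\alpha(w'+z')}$, the solution at the four corner points becomes
\begin{align*}
u\bigl(\tfrac{w+z}{\sqrt 2}, \tfrac{-w+z}{\sqrt 2}\bigr) \;=\; \tfrac{1}{2} e^{-\alpha(w+z)} \int_0^w \int_0^z \psi(w',z') \, \tilde W(dw', dz'),
\end{align*}
where $\tilde W$ is the white noise in the rotated coordinates. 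The rectangular increment $\Delta$ in the lemma is thus the signed combination of this expression evaluated at $(w,z), (w-\varepsilon_1,z), (w,z-\varepsilon_2), (w-\varepsilon_1,z-\varepsilon_2)$.

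Next I would split the integration rectangle $[0,w]\times[0,z]$ into the four disjoint sub-rectangles
\begin{align*}
R_{11}&=[0,w{-}\varepsilon_1]\times[0,z{-}\varepsilon_2], & R_{12}&=[0,w{-}\varepsilon_1]\times[z{-}\varepsilon_2,z],\\
R_{21}&=[w{-}\varepsilon_1,w]\times[0,z{-}\varepsilon_2], & R_{22}&=[w{-}\varepsilon_1,w]\times[z{-}\varepsilon_2,z],
\end{align*}
and compute the coefficient of $\psi(w',z')\,\tilde W(dw',dz')$ on each region by collecting the four exponential prefactors $e^{-\alpha(w+z)}, e^{-\alpha(w-\varepsilon_1+z)}, e^{-\alpha(w+z-\varepsilon_2)}, e^{-\alpha(w-\varepsilon_1+z-\varepsilon_2)}$ according to which of the four sub-rectangles lies in the support of each term. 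A short computation gives the coefficients $\tfrac12 e^{-\alpha(w+z)}(1-e^{\alpha\varepsilon_1})(1-e^{\alpha\varepsilon_2})$ on $R_{11}$, $\tfrac12 e^{-\alpha(w+z)}(1-e^{\alpha\varepsilon_1})$ on $R_{12}$, $\tfrac12 e^{-\alpha(w+z)}(1-e^{\alpha\varepsilon_2})$ on $R_{21}$, and $\tfrac12 e^{-\alpha(w+z)}$ on $R_{22}$.

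Applying the It\^o--Walsh isometry (the four regions are disjoint) and the uniform bound $\psi(w',z')^2 \lesssim 1$ on $[0,T]^2$, I obtain
\begin{align*}
\E[\Delta^2] \;\lesssim\; (1-e^{\alpha\varepsilon_1})^2(1-e^{\alpha\varepsilon_2})^2\,|R_{11}| + (1-e^{\alpha\varepsilon_1})^2 |R_{12}| + (1-e^{\alpha\varepsilon_2})^2 |R_{21}| + |R_{22}|.
\end{align*}
The elementary estimate $|1-e^{\alpha\varepsilon}| \lesssim \varepsilon$ on $[0,T]$, together with $|R_{11}|\lesssim 1$, $|R_{12}|\lesssim \varepsilon_2$, $|R_{21}|\lesssim \varepsilon_1$, $|R_{22}|=\varepsilon_1\varepsilon_2$, then yields $\E[\Delta^2] \lesssim \varepsilon_1^2\varepsilon_2^2 + \varepsilon_1^2\varepsilon_2 + \varepsilon_1\varepsilon_2^2 + \varepsilon_1\varepsilon_2 \lesssim \varepsilon_1\varepsilon_2$, using $\varepsilon_i < t \le T$ to absorb the higher-order terms. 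The only real subtlety is recognizing the telescoping of the exponential prefactors that produces the factor $(1-e^{\alpha\varepsilon_1})(1-e^{\alpha\varepsilon_2})$ on the dominant-area piece $R_{11}$; without this cancellation the $R_{11}$ term would only give $O(1)$. This is the step where the damping interacts non-trivially with the rectangular increment, and it is the main bookkeeping hurdle, but it resolves cleanly thanks to the tensor-product form of the exponential weight in characteristic coordinates.
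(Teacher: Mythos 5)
Your proof is correct and follows essentially the same geometric decomposition as the paper: the light cones from the four corner points partition the backward light cone from $(t,x)$ into four regions according to which of $\Gamma$, $\Gamma_1$, $\Gamma_2$, $\Gamma_{12}$ are active, and the two-factor cancellation on the large region where all four overlap is the heart of the estimate. The paper does this bookkeeping in the original $(s,y)$ coordinates and extracts the cancellation via the mean value theorem and a Taylor expansion of $1+e^{\frac{a(\varepsilon_1+\varepsilon_2)}{2\sqrt 2}}-e^{\frac{a\varepsilon_1}{2\sqrt 2}}-e^{\frac{a\varepsilon_2}{2\sqrt 2}}$. Your version in characteristic coordinates makes the exponential prefactor manifestly of tensor-product form, so the cancellation collapses to the factorization $(1-e^{\alpha\varepsilon_1})(1-e^{\alpha\varepsilon_2})$ with no expansion needed; that is a genuinely cleaner way to organize the same bound, and I think it is closer in spirit to what makes the lemma true.

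One small but real inaccuracy you should repair: after the rotation, the support of $\Gamma(t-s,x-y)\1_{[0,t]}(s)$ is \emph{not} the rectangle $[0,w]\times[0,z]$. The constraints $|x-y|<t-s$ and $s\ge 0$ become $w'<w$, $z'<z$, and $w'+z'\ge 0$, which is a triangle (with vertices at $(w,z)$, $(w,-w)$, $(-z,z)$), not a rectangle; the coordinates $w'$, $z'$ are allowed to be negative individually. This does not break your argument, because the decomposition by $\{w'\gtrless w-\varepsilon_1\}$ and $\{z'\gtrless z-\varepsilon_2\}$ still gives four disjoint pieces of that triangle on which the integrand has exactly the coefficients you computed, and your area bounds remain valid upper bounds (for instance the four-fold overlap is a sub-triangle of area $\tfrac12(w+z-\varepsilon_1-\varepsilon_2)^2\le t^2\le T^2$, and the two strip pieces have one side of length at most $\sqrt2\,T$ and the other of length $\varepsilon_1$ or $\varepsilon_2$). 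But as written, "the support becomes $[0,w]\times[0,z]$" is false for general $x\in\R$ (indeed $w=(t-x)/\sqrt2$ can be arbitrarily negative), so you should state the correct triangular region, or simply say you decompose the true support by the two half-plane conditions $\{w'\ge w-\varepsilon_1\}$ and $\{z'\ge z-\varepsilon_2\}$ and note the resulting area bounds.
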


\begin{proof}
    By It\^o-Walsh isometry, the left hand side of the desired inequality is \begin{align*}
        &\int_0^t \int_{\R} \big(\Gamma(t-s,x-y)-\Gamma(t-\tfrac{\varepsilon_1}{\sqrt{2}}-s,x+\tfrac{\varepsilon_1}{\sqrt{2}}-y)-\Gamma(t-\tfrac{\varepsilon_2}{\sqrt{2}}-s, x-\tfrac{\varepsilon_2}{\sqrt{2}}-y)\\
        &+\Gamma(t-\tfrac{\varepsilon_1+\varepsilon_2}{\sqrt{2}}-s, x+\tfrac{\varepsilon_1-\varepsilon_2}{\sqrt{2}}-y)\big)^2dyds.
    \end{align*}
    For this proof, we will employ the simpler notations 
    \begin{align*}
        \Gamma:=&\Gamma(t-s,x-y),\quad \Gamma_1:=\Gamma(t-\tfrac{\varepsilon_1}{\sqrt{2}}-s,x+\tfrac{\varepsilon_1}{\sqrt{2}}-y),\\
        \Gamma_2:=&\Gamma(t-\frac{\varepsilon_2}{\sqrt{2}}-s, x-\tfrac{\varepsilon_2}{\sqrt{2}}-y),\text{ and }\Gamma_{12}:=\Gamma(t-\frac{\varepsilon_1+\varepsilon_2}{\sqrt{2}}-s, x+\tfrac{\varepsilon_1-\varepsilon_2}{\sqrt{2}}-y).
    \end{align*}
    In the following, we define some subsets of the open half plane $\set{y\in \R,s>0}$ given $x\in \R,0<\varepsilon_1,\varepsilon_2<t$ from the statement of the lemma.
    \begin{align*}
        &R:=\set{|x-y|<t-s},\quad R_1:=\left(R\setminus \Set{\left|x-\frac{\varepsilon_2}{\sqrt{2}}-y\right|<t-\frac{\varepsilon_2}{\sqrt{2}}-s}\right)\cap\Set{\left|x+\frac{\varepsilon_1}{\sqrt{2}}-y\right|<t-\frac{\varepsilon_1}{\sqrt{2}}-s}\\
        &R_2:=\left(R\setminus \Set{\left|x+\frac{\varepsilon_1}{\sqrt{2}}-y\right|<t-\frac{\varepsilon_1}{\sqrt{2}}-s}\right)\cap\Set{\left|x-\frac{\varepsilon_2}{\sqrt{2}}-y\right|<t-\frac{\varepsilon_2}{\sqrt{2}}-s},\\
        &R_{12}:=R\cap \Set{\left|x+\frac{\varepsilon_1}{\sqrt{2}}-y\right|<t-\frac{\varepsilon_1}{\sqrt{2}}-s} \cap \Set{\left|x-\frac{\varepsilon_2}{\sqrt{2}}-y\right|<t-\frac{\varepsilon_2}{\sqrt{2}}-s},\quad \Tilde{R}_{12}:=R\setminus (R_1\cup R_2\cup R_{12}).
    \end{align*}
    We now list some useful observations about the above sets.\begin{enumerate}
        \item $R$ is a disjoint union of $R_1,R_2,R_{12}$, and $\Tilde{R}_{12}$. It is the support of $\Gamma$ and contains the support of $\Gamma_1,\Gamma_2$, and $\Gamma_{12}$.
        \item $\Tilde{R}_{12}$ has area $\varepsilon_1\varepsilon_2$ and only $\Gamma$ is nonzero on it.
        \item $R_1$ has area smaller than $\varepsilon_2 T$ and only $G$ and $\Gamma_1$ are nonzero on it. Similarly, $R_2$ has area smaller than $\varepsilon_1 T$ and only $\Gamma$ and $\Gamma_2$ are nonzero on it.
        \item $R_{12}$ has area no larger than $\frac{1}{2}(t-\varepsilon_1-\varepsilon_2)^2<T^2$ and $\Gamma,\Gamma_1,\Gamma_2$ and $\Gamma_{12}$ are all nonzero on it.
    \end{enumerate}
    By the above observations, we can write the left hand side of our estimate as $\Tilde{I}_{12}+I_1+I_2+I_{12}$, where \begin{align*}
        \Tilde{I}_{12}:=&\iint_{\Tilde{R}_{12}} \Gamma^2 dyds,\quad I_1:=\iint_{R_1} (\Gamma-\Gamma_1)^2 dyds,\\
        I_2:=& \iint_{R_2} (\Gamma-\Gamma_2)^2 dyds,\text{ and }I_{12}:=\iint_{R_{12}} (\Gamma-\Gamma_1-\Gamma_2+\Gamma_{12})^2 dyds.
    \end{align*}
    We will now show that all the integrals above are $\lesssim \varepsilon_1\varepsilon_2$ with the constant possibly depending on $T$, which will conclude the proof.
    For $\Tilde{I}_{12}$, $1-\infty$ H\"older with the boundedness of the integrand and the area of $\Tilde{R}_{12}$ gives the result.\\
    For $I_1$, we first apply $1-\infty$ H\"older, giving us \begin{align*}
        I_1&\leq \varepsilon_2 T\max_{s\in [0,t-\frac{\varepsilon_1}{\sqrt{2}}]}\left|e^{-\frac{a}{2}(t-s)}-e^{-\frac{a}{2}(t-\frac{\varepsilon_1}{\sqrt{2}}-s)}\right|^2\\
        &\lesssim \varepsilon_2 \left|1-e^{-\frac{a\varepsilon_1}{2\sqrt{2}}}\right|
        \lesssim \varepsilon_1 \varepsilon_2.
    \end{align*}
    A similar argument works for $I_2$. This leaves us with $I_{12}$, for which we have \begin{align*}
        I_{12} &\leq T^2 \max_{s\in [0,t-\frac{\varepsilon_1+\varepsilon_2}{\sqrt{2}}]}\left|e^{-\frac{a}{2}(t-s)}+e^{-\frac{a}{2}(t-\frac{\varepsilon_1+\varepsilon_2}{\sqrt{2}}-s)}-(e^{-\frac{a}{2}(t-\frac{\varepsilon_1}{\sqrt{2}}-s)}+e^{-\frac{a}{2}(t-\frac{\varepsilon_2}{\sqrt{2}}-s)})\right|\\
        &\lesssim \left|1+e^{\frac{a}{2}(\frac{\varepsilon_1+\varepsilon_2}{\sqrt{2}})}-(e^{\frac{a}{2}(\frac{\varepsilon_1}{\sqrt{2}})}+e^{\frac{a}{2}(\frac{\varepsilon_2}{\sqrt{2}})})\right|.
    \end{align*}
    If one Taylor expands the exponential function, one can easily see that the term inside the last absolute value is $\frac{a^2}{2}\varepsilon_2\varepsilon_2+O(\varepsilon_1\varepsilon_2)$. This finishes the proof.
\end{proof}

\begin{proof}[Proof of Theorem \ref{th:sim:lil}]
    Fix $N>0$ and $0<a_1<a_2$.
    It suffices to show that the existence of $0<K_2<\infty$ such that for all $z_0 \in [a_1,a_2]$,
    \begin{align*}
        \P\left\{ \limsup_{h\to0^+} \frac{|u\big(\frac{w+z_0}{\sqrt{2}}+h, \frac{-w+z_0}{\sqrt{2}}+h\big)-u\big(\frac{w+z_0}{\sqrt{2}}, \frac{-w+z_0}{\sqrt{2}}\big)|}{\sqrt{h\log\log(1/h)}} \le K_2 \text{ for all $w \in [0,N]$} \right\} = 1.
    \end{align*}
    Let $0<K<\infty$ be a number whose value will be chosen later.
    For each integer $n \ge 2$, consider the event
    \begin{align*}
        A_n = \left\{ \sup_{w \in [0,b]}\sup_{h \in [2^{-n-1},2^{-n}]}\frac{|u\big(\frac{w+z_0+h}{\sqrt{2}}, \frac{-w+z_0+h}{\sqrt{2}}\big)-u\big(\frac{w+z_0}{\sqrt{2}},\frac{-w+z_0}{\sqrt{2}}\big)|}{\sqrt{h \log\log(1/h)}} > K \right\}.
    \end{align*}
    For simplicity, write
    \[
        \tilde{u}(w,z) = u\big(\tfrac{w+z}{\sqrt{2}}, \tfrac{-w+z}{\sqrt{2}}\big).
    \]
    Define $I_n=[0,N]\times[2^{-n-1},2^{-n}]$ and consider the process
    \[
        X(w,h)=\tilde{u}(w,z_0+h)-\tilde{u}(w,z_0), \quad 
        (w,h) \in I_n.
    \]
    Define the metric $d_X$ on $I_n$ by 
    \[
        d_X((w,h),(w',h')) = \|X(w,h)-X(w',h')\|_2 = (\E|X(w,h)-X(w',h')|^2)^{1/2}.
    \]
    Let $N(I_n, r)$ be the entropy number, i.e., the smallest number of $d_X$-balls of radius $r$ needed to cover $I_n$.
    Suppose $h'<h$. Then, by triangle inequality, Proposition \ref{lem:u-u}, and Lemma \ref{lem:rec_incr},
    \begin{align*}
        &d_X((w,h),(w',h'))
        = \|\tilde{u}(w,z_0+h) - \tilde{u}(w,z_0) - \tilde{u}(w',z_0+h') + \tilde{u}(w',z_0)\|_2\\
        &\le \|\tilde{u}(w,z_0+h)-\tilde{u}(w,z_0+h')\|_2 + \|\tilde{u}(w,z_0+h')-\tilde{u}(w,z_0) - \tilde{u}(w',z_0+h') + \tilde{u}(w',z_0) \|_2\\
        & \lesssim \sqrt{|h-h'|} + \sqrt{h'|w-w'|}
        \le \sqrt{|h-h'|} + \sqrt{2^{-n}|w-w'|}.
    \end{align*}
    In particular, this implies that the $d_X$-diameter $D_n$ of $I_n$ satisfies
    \[
        D_n \lesssim 2^{-n/2}
    \]
    and the entropy number satisfies
    \[
        N(I_n, r) \lesssim \frac{N2^{-n-1}}{2^n r^4} \lesssim \left( \frac{1}{2^{n/2}r} \right)^4.
    \]
    Thank to Borell's inequality \cite{Borell}, we know that for all $v>0$,
    \begin{align*}
        \P\left\{ \sup_{(w,h)\in I_n}|X(w,h)| - \E\left[ \sup_{(w,h) \in I_n}|X(w,h)| \right] > v\right\}
        \le \exp\left( - \frac{v^2}{2 \sup_{(w,h) \in I_n}\E[|X(w,h)|^2]} \right).
    \end{align*}    
    In particular, by Dudley's entropy theorem \cite{Dudley}, there exists a universal constant $C_1$ such that
    \begin{align*}
        &\E\left[ \sup_{(w,h) \in I_n}|X(w,h)| \right] 
        \le C_1 \int_0^{D_n} \sqrt{\log N(I_n,r)} dr\\
        &\le 4C_1 \int_0^{C_2 2^{-n/2}} \sqrt{\log(\frac{C_2}{2^{n/2}r})}dr
        = 4C_1 \int_0^\infty 2 C_2 2^{-n/2} x^2 e^{-x^2} dx
        \le C_3 2^{-n/2}
    \end{align*}
    for some constants $C_2, C_3$ that do not depend on $n$.
    Moreover, by Proposition \ref{lem:u-u}, there exists $C_4 > 0$ such that for all $n \ge 2$,
    \begin{align*}
        \sup_{(w,h) \in I_n} \E[|X(w,h)|^2] \le C_4 2^{-n}.
    \end{align*}
    Therefore, if $n$ is sufficiently large so that $\frac12 K \sqrt{2^{-n-1}\log\log 2^n} > C_3 2^{-n/2}$, then
    \begin{align*}
        \P(A_n) &\le \P\left\{ \sup_{(w,h)\in I_n}|X(w,h)| > K \sqrt{2^{-n-1} \log\log 2^n} \right\}\\
        & \le \P\left\{ \sup_{(w,h)\in I_n}|X(w,h)| -  \E\left[ \sup_{(w,h) \in I_n}|X(w,h)| \right] > \frac12 K \sqrt{2^{-n-1} \log\log 2^n} \right\}\\
        &\le \exp\left( - \frac{\frac14 K^2 2^{-n-1}\log\log2^n}{2 \sup_{(w,h) \in I_n}\E[|X(w,h)|^2]} \right)
        \le \exp\left( - \frac{K^2 \log\log2^n}{16C_4} \right).
    \end{align*}
    This ensures the existence of some large constant $K<\infty$ such that $\sum_{n=2}^\infty \P(A_n) < \infty$.
    This together with the Borel-Cantelli lemma implies the desired result.
\end{proof}

\section{Proof of Theorem \ref{th:sing}}
\label{s:6}

    Fix $0 \le w_0 < w_0'$, $0<z_0<z_0'$, and let $t_0 = w_0/\sqrt{2}$.
    We decompose $u$ as
    \[
        u(t,x) = u_1(t,x) + u_2(t,x)
    \] 
    for any $(t,x) \in [t_0,\infty) \times \R$, where
    \begin{align*}
        &u_1(t,x) = \int_0^{t_0} \int_\R \Gamma(t-s,x-y) W(ds,dy) = \frac12 \int_0^{t_0} \int_{x-(t-s)}^{x+(t-s)} e^{-a(t-s)/2} W(ds,dy),\\
        &u_2(t,x) = \int_{t_0}^t  \int_\R \Gamma(t-s,x-y) W(ds,dy) = \frac12 \int_{t_0}^t \int_{x-(t-s)}^{x+(t-s)} e^{-a(t-s)/2} W(ds,dy).
    \end{align*}
    Note that $u_1$ and $u_2$ are independent processes and $u_2$ is independent of $\mathscr{F}_{t_0}$.
    Let $w \in [w_0, w_0']$, $z \in [z_0,z_0']$, and write $(t,x) = (\tfrac{w+z}{\sqrt{2}}, \tfrac{-w+z}{\sqrt{2}})$.
    In order to analyze the existence and propagation of singularities, we write
    \begin{align}
        \notag&u_1(t+h,x+h)-u_1(t,x)\\
        \notag&= u_1\left(\tfrac{w+z}{\sqrt{2}}+h, \tfrac{-w+z}{\sqrt{2}}+h\right) - u_1\left(\tfrac{w+z}{\sqrt{2}}, \tfrac{-w+z}{\sqrt{2}}\right)\\
        \notag&= \frac12 \int_0^{t_0} \int_{x-(t-s)}^{x+(t-s)} (e^{-a(t+h-s)/2} - e^{-a(t-s)/2}) W(ds, dy) 
        + \frac12 \int_0^{t_0} \int_{x+(t-s)}^{x+(t-s)+2h} e^{-a(t+h-s)/2} W(ds, dy)\\
        \notag&= \frac{e^{-at/2}(e^{-ah/2}-1)}{2} \int_0^{t_0} \int_{-\sqrt{2}w+s}^{\sqrt{2}z-s} e^{as/2} W(ds, dy) 
        + \frac{e^{-a(t+h)/2}}{2} \int_0^{t_0} \int_{\sqrt{2}z-s}^{\sqrt{2}z-s+2h} e^{as/2} W(ds, dy)\\
        \label{E:u_1-u_1}&= \frac{e^{-\frac{a(w+z)}{2\sqrt{2}}}(e^{-ah/2}-1)}{2} X(w,z) + \frac{e^{-\frac{a(w+z)}{2\sqrt{2}}}e^{-ah/2}}{2} (Y(z+\sqrt{2} h)-Y(z)),
    \end{align}
    where
    \begin{align}\label{E:X:Y}
        X(w,z) = \int_0^{t_0} \int_{-\sqrt{2}w+s}^{\sqrt{2}z-s} e^{as/2} W(ds, dy), \qquad
        Y(z) = \int_0^{t_0} \int_{-s}^{\sqrt{2}z-s} e^{as/2} W(ds,dy)
    \end{align}

    We first establish some basic properties for the processes $X$ and $Y$.
    \begin{lemma}\label{lem:X}
        $X=\{X(w,z)\}_{(w,z) \in [w_0,w_0']\times [z_0,z_0']}$ is a centered Gaussian random field with 
        \begin{align}\label{E:X-X}
            \Var(X(w,z)-X(w',z')) \lesssim |w-w'|+|z-z'|
        \end{align}
        uniformly for all $(w,z), (w',z') \in [w_0,w_0']\times [z_0,z_0']$.
        Hence $X$ is a.s. continuous on $[w_0,w_0']\times [z_0,z_0']$.
    \end{lemma}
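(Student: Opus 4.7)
The plan is to observe that $X(w,z)$ is a Walsh stochastic integral of the deterministic kernel $e^{as/2}\1_{R(w,z)}(s,y)$ against the space-time white noise $\dot W$, where
\[
    R(w,z) := \{(s,y) \in [0,t_0] \times \R : -\sqrt{2}w+s \le y \le \sqrt{2}z-s\}.
\]
This immediately forces $\{X(w,z)\}$ to be a centered Gaussian random field, so all of the content is in the variance estimate. First I would verify that for $(w,z) \in [w_0, w_0']\times[z_0, z_0']$ and $s \in [0,t_0]$, the non-degeneracy condition $-\sqrt{2}w+s \le \sqrt{2}z-s$ is equivalent to $s \le \frac{w+z}{\sqrt{2}}$, and since $\frac{w+z}{\sqrt{2}} \ge \frac{w_0+z_0}{\sqrt{2}} > w_0/\sqrt{2} = t_0$, each $s$-slice of $R(w,z)$ is a genuine nonempty interval for every relevant $s$.

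Next, applying the It\^o--Walsh isometry, I would write
\[
    \Var(X(w,z) - X(w',z')) = \int_0^{t_0} \int_\R e^{as} \bigl| \1_{R(w,z)}(s,y) - \1_{R(w',z')}(s,y) \bigr|^2 \, dy\, ds.
\]
The integrand is supported on the symmetric difference $R(w,z) \triangle R(w',z')$. Exploiting the monotonicity of the defining region (increasing $w$ extends each $s$-slice to the left by $\sqrt{2}|\Delta w|$, and increasing $z$ extends it to the right by $\sqrt{2}|\Delta z|$), each $s$-slice of this symmetric difference decomposes into at most two strips of total length $\le \sqrt{2}(|w-w'| + |z-z'|)$. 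Combined with $e^{as} \le e^{at_0}$ on $[0,t_0]$, integrating in $s$ yields $\Var(X(w,z) - X(w',z')) \lesssim |w-w'| + |z-z'|$ uniformly over the relevant parameter rectangle, with implicit constant depending only on $(a, w_0, z_0)$.

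For almost sure continuity, I would invoke Gaussian equivalence of moments: the $L^2$ bound upgrades to $\E|X(w,z) - X(w',z')|^p \lesssim (|w-w'|+|z-z'|)^{p/2}$ for every $p \ge 2$. Choosing $p$ large enough that $p/2 > 2$ (e.g., $p=6$) and applying Kolmogorov's continuity theorem on the two-dimensional compact rectangle $[w_0,w_0'] \times [z_0,z_0']$ then supplies a continuous modification.

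I do not anticipate any serious obstacle: the only care needed is the case analysis for the symmetric difference when the signs of $w-w'$ and $z-z'$ differ, which can be handled either by the triangle inequality through the intermediate point $(w',z)$, or by bounding $\|\1_{[a,b]} - \1_{[a',b']}\|_{L^2(dy)}^2 \le |a-a'| + |b-b'|$ directly and integrating in $s$.
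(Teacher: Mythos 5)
Your proof is correct and follows essentially the same route as the paper: It\^o--Walsh isometry together with an elementary estimate on the measure of the symmetric difference of the integration regions (the paper phrases this via a triangle inequality through the intermediate point $(w,z')$, which you explicitly note is equivalent to your symmetric-difference bookkeeping), followed by Kolmogorov's continuity theorem. No gaps.
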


    \begin{proof}
        Consider $w>w'$ in $[w_0,w_0']$ and $z,z'$ in $[z_0,z_0']$.
        Assuming that $z>z'$, we have
        \begin{align*}
            &\Var(X(w,z)-X(w',z'))\\
            & \le 2 \Var(X(w,z)-X(w,z')) + 2 \Var(X(w,z')-X(w',z'))\\
            & = 2 \int_0^{t_0} \int_{\sqrt{2}z'-s}^{\sqrt{2}z-s} e^{as} dyds + 2 \int_0^{t_0} \int_{-\sqrt{2}w+s}^{-\sqrt{2}w'+s} e^{as} dyds\\
            & = 2\sqrt{2} (z-z') \int_0^{t_0} e^{as} ds + 2\sqrt{2} (w-w') \int_0^{t_0} e^{as} ds
            \lesssim |z-z'| + |w-w'|.
        \end{align*}
        The case of $z<z'$ is similar.
        This proves \eqref{E:X-X}. 
        The continuity of $X$ follows from a standard application of the Kolmogorov continuity theorem.
    \end{proof}

    \begin{lemma}\label{lem:Y}
        $\{ C_0^{-1} Y(z)\}_{z \ge 0}$ is a standard Brownian motion, where $C_0 = \sqrt{2} \int_0^{t_0} e^{as} ds$.
    \end{lemma}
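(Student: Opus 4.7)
The plan is to verify directly that $C_0^{-1}Y$ satisfies the defining properties of a standard Brownian motion. Since $Y$ is a Wiener integral of the deterministic integrand $e^{as/2}\1_{\{-s\le y\le \sqrt{2}z-s\}}$ against the space-time white noise, the process $\{Y(z)\}_{z \ge 0}$ is automatically centered Gaussian, and $Y(0)=0$ is immediate from the definition.

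For independent increments, fix $z'<z$ and note that, since the integrand defining $Y$ does not depend on $z$,
\[
    Y(z) - Y(z') = \int_0^{t_0}\int_{\sqrt{2}z'-s}^{\sqrt{2}z-s} e^{as/2}\, W(ds, dy).
\]
The slanted strip $S(z',z):=\{(s,y): 0\le s\le t_0,\, \sqrt{2}z'-s \le y \le \sqrt{2}z-s\}$ is disjoint from $S(0,z')$, which defines $Y(z')$, and more generally the strips $S(z_i,z_{i+1})$ associated with any finite increasing sequence $0=z_0 < z_1 < \cdots < z_n$ are pairwise disjoint by monotonicity of $z\mapsto \sqrt{2}z-s$ in $z$. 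The independence of white noise on disjoint Borel sets then yields the independence of the increments of $Y$.

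The It\^o--Walsh isometry computes the variance:
\[
    \Var(Y(z) - Y(z')) = \int_0^{t_0}\int_{\sqrt{2}z'-s}^{\sqrt{2}z-s} e^{as}\, dy\, ds = \sqrt{2}(z-z')\int_0^{t_0} e^{as}\, ds,
\]
so the increments are stationary with variance proportional to $z-z'$, and this identifies the normalizing constant so that $C_0^{-1}(Y(z)-Y(z'))$ has variance $z-z'$. Gaussianity together with this second-moment bound gives $\E[|Y(z)-Y(z')|^{2k}] \lesssim |z-z'|^k$ for every $k\in \N_+$, so Kolmogorov's continuity theorem furnishes an a.s. continuous modification. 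Combining these four facts identifies $C_0^{-1}Y$ as a standard Brownian motion. There is no real obstacle in this proof; the only point needing any care is the disjointness of the slanted strips $S(z_i,z_{i+1})$, which is immediate from the monotonicity observation above.
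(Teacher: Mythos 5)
Your proof is correct and rests on the same core computation as the paper's: the It\^o--Walsh isometry to get $\Var(Y(z)-Y(z'))=C_0|z-z'|$ and Kolmogorov's theorem for continuity. The only difference is in how you conclude: you verify independent increments directly via disjointness of the slanted supports of the white-noise integrals, whereas the paper observes that a centered continuous Gaussian process with $B(0)=0$ and $\Var(B(z')-B(z))=|z'-z|$ already has the Brownian covariance $\min(z,z')$ and hence the Brownian law; both routes are valid and about equally short.
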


    \begin{proof}
        For any $0\le z < z'$, by It\^o-Walsh isometry,
        \begin{align*}
            \Var(Y(z')-Y(z))
            = \int_0^{t_0} \int_{\sqrt{2}z-s}^{\sqrt{2}z'-s} e^{as} dyds = \sqrt{2}(z'-z) \int_0^{t_0} e^{as}ds = C_0(z'-z).
        \end{align*}
        By the Kolmogorov continuity theorem, $Y$ is a.s. continuous.
        This shows that $B(z):=C_0^{-1}Y(z)$ is a centered, continuous Gaussian process with $B(0) = 0$ and $\Var(B(z')-B(z))=|z'-z|$, and hence is a standard Brownian motion.
    \end{proof}
    
    We are now ready to prove Theorem \ref{th:sing}.
    As before, in order to simplify notations, we write
    \begin{align*}
        &\ell(w,z) = \limsup_{h\to0^+}\frac{|u\big(\frac{w+z}{\sqrt{2}}+h, \frac{-w+z}{\sqrt{2}}+h\big) - u\big(\frac{w+z}{\sqrt{2}}, \frac{-w+z}{\sqrt{2}}\big)|}{\sqrt{h \log\log(1/h)}},\\
        &\ell_i(w,z) = \limsup_{h\to0^+}\frac{|u_i\big(\frac{w+z}{\sqrt{2}}+h, \frac{-w+z}{\sqrt{2}}+h\big) - u_i\big(\frac{w+z}{\sqrt{2}}, \frac{-w+z}{\sqrt{2}}\big)|}{\sqrt{h \log\log(1/h)}}, \quad i=1,2.
    \end{align*}

\begin{proof}[Proof of Theorem \ref{th:sing}]
    Fix an integer $N\in \N_+$, and take $w_0' = w_0+N$.
    The processes $X$ and $Y$ defined in \eqref{E:X:Y} are both $\mathscr{F}_{t_0}$-measurable.
    By Lemma \ref{lem:X},
    \begin{align*}
        \P\left\{ \sup_{w \in [w_0, w_0+N]} \sup_{z \in [z_0,z_0']} |X(w,z)| < +\infty \right\} = 1.
    \end{align*}
    Since $|1-e^{-ah/2}| \lesssim h$ for $h>0$ small, the preceding implies that
    \begin{align}\label{X:lil}
        \P\left\{ \limsup_{h \to 0^+} \sup_{(w,z) \in [w_0,w_0+N]\times[z_0,z_0']} \frac{e^{-\frac{a(w+z)}{2\sqrt{2}}}|e^{-ah/2}-1||X(w,z)|}{\sqrt{h\log\log(1/h)}} = 0 \right\} = 1.
    \end{align}
    According to Lemma \ref{lem:Y}, $C_0^{-1}Y(z)$ is a standard Brownian motion.
    Hence, L\'evy's modulus of continuity theorem implies that
    \begin{align}\label{Y:mc}
        \limsup_{h \to 0^+} \sup_{z \in [z_0,z_0']}\frac{|Y(z+\sqrt{2}h)-Y(z)|}{\sqrt{h \log(1/h)}} = C_0 2^{3/4} \quad \text{a.s.}
    \end{align}
    With \eqref{Y:mc}, we may use Meyer's section theorem and an argument with nested intervals as in \cite{LeeXiao22} to obtain an $\mathscr{F}_{t_0}$-measurable random variable $Z$ such that $Z \in [z_0,z_0']$ a.s. and satisfies
    \begin{align}\label{Y:sing}
        \P\left\{ \limsup_{h \to 0^+} \frac{|Y(Z+\sqrt{2}h)-Y(Z)|}{\sqrt{h \log \log(1/h)}} = +\infty \right\} = 1.
    \end{align}
    By \eqref{E:u_1-u_1} and \eqref{limsup:ineq}, for all $w \in [w_0, w_0+N]$,
    \begin{align}\begin{split}\label{u_1:limsup:ineq}
        &\limsup_{h \to 0^+} \frac{|u_1\big(\frac{w+Z}{\sqrt{2}}+h, \frac{-w+Z}{\sqrt{2}}+h\big)-u_1\big(\frac{w+Z}{\sqrt{2}}, \frac{-w+Z}{\sqrt{2}}\big)|}{\sqrt{h \log\log(1/h)}}\\
        & \ge e^{-\frac{a(w+Z)}{2\sqrt{2}}} \limsup_{h\to0^+}\frac{e^{-ah/2}|Y(Z+\sqrt{2}h)-Y(Z)|}{2\sqrt{h \log\log(1/h)}} - \limsup_{h\to0^+} \frac{e^{-\frac{a(w+Z)}{2\sqrt{2}}}|e^{-ah/2}-1||X(w,Z)|}{2\sqrt{h\log\log(1/h)}}.
    \end{split}\end{align}
    In particular, the above holds for $w=w_0$.
    Hence, we may apply \eqref{X:lil} and \eqref{Y:sing} to deduce that
    \begin{align}\label{u_1:sing}
        \P\left\{ \ell_1(w_0,Z) = +\infty \right\} = 1.
    \end{align}
    On the other hand, observe that 
    \begin{align}\label{E:u_2=u:law}
        \{u_2(t_0+t,x)\}_{(t,x) \in [0,\infty) \times \R} \text{ has the same law as }
        \{u(t,x)\}_{(t,x) \in [0,\infty)\times \R}.
    \end{align} 
    Indeed, since both of them are centered Gaussian random fields, this can be verified easily by comparing covariance:
    \begin{align*}
        \E[u_2(t_0+t,x)u_2(t_0+t',x')] =\E[u(t,x)u(t',x')]
        \quad \text{for all $(t,x),(t',x')\in [0,\infty) \times \R$.}
    \end{align*}
    Hence, it follows from Theorem \ref{th:sim:lil} that there exists $K_2<\infty$ such that for all fixed $z \in [z_0,z_0']$,
    \begin{align*}
        \P\left\{\ell_2(w_0,z) \le  K_2 \right\} 
        = \P\left\{ \ell(0,z) \le K_2 \right\}=1.
    \end{align*}
    But since $u_2$ is independent of $\mathscr{F}_{t_0}$ and the random variable $Z$ is $\mathscr{F}_{t_0}$-measurable, the preceding and conditioning yield
    \begin{align}\label{E:u_2}
        \P\left\{ \ell_2(w_0,Z) \le K_2 \right\}
        = \E\left[ \P\left\{ \ell_2(w_0,Z) \le K_2 \,\big|\, Z \right\} \right]
        = 1.
    \end{align}
    Thanks to \eqref{limsup:ineq}, we may combine \eqref{u_1:sing} and \eqref{E:u_2} to obtain
    \begin{align*}
        \ell(w_0, Z) \ge \ell_1(w_0,Z) - \ell_2(w_0,Z) \ge + \infty \quad \text{a.s.}
    \end{align*}
    This proves part (i) of Theorem \ref{th:sing}.
    To prove part (ii), let $Z$ be an $\mathscr{F}_{t_0}$-measurable random variable such that
    \begin{align}\label{Assump:Z}
        \P\left\{ Z \in [z_0,z_0'] \text{ and } \ell(w_0,Z) = +\infty \right\} = 1.
    \end{align}
    By Theorem \ref{th:sim:lil} and \eqref{E:u_2=u:law}, there exists $K_2<\infty$ such that for any fixed $z\in [z_0,z_0']$,
    \begin{align}\label{E:u_2:sim:lil}
        \P\left\{ \ell_2(w,z) \le K_2  \text{ for all $w\in [w_0,w_0+N]$} \right\}
        = \P\left\{ \ell(w,z) \le K_2 \text{ for all $w\in [0,N]$} \right\} = 1.
    \end{align}
    Since $u_2$ is independent of $\mathscr{F}_{t_0}$ and $Z$ is $\mathscr{F}_{t_0}$-measurable, we may use conditioning and apply \eqref{E:u_2:sim:lil} to deduce that
    \begin{align}\begin{split}\label{E:u_2:sim:lil:Z}
        &\P\left\{ \ell_2(w,Z) \le K_2  \text{ for all $w\in [w_0,w_0+N]$}  \right\}\\
        &= \E\left[ \P\left\{ \ell_2(w,Z) \le K_2 \text{ for all $w\in [w_0,w_0+N]$} \, \big|\, Z \right\} \right] = 1.
    \end{split}\end{align}
    In particular, \eqref{Assump:Z} and \eqref{E:u_2:sim:lil:Z} implies that
    \begin{align}\label{u_1:sing:w_0}
        \P\left\{ \ell_1(w_0,Z) = +\infty \right\}=1.
    \end{align}
    Now, we apply \eqref{E:u_1-u_1} and \eqref{limsup:ineq} to obtain the reverse inequality to \eqref{u_1:limsup:ineq}: For all $w \in [w_0,w_0+N]$,
    \begin{align*}
        &e^{-\frac{a(w+Z)}{2\sqrt{2}}} \limsup_{h\to0^+}\frac{e^{-ah/2}|Y(Z+\sqrt{2}h)-Y(Z)|}{2\sqrt{h \log\log(1/h)}} \\
        & \ge \limsup_{h \to 0^+} \frac{|u_1\big(\frac{w+Z}{\sqrt{2}}+h, \frac{-w+Z}{\sqrt{2}}+h\big)-u_1\big(\frac{w+Z}{\sqrt{2}}, \frac{-w+Z}{\sqrt{2}}\big)|}{\sqrt{h \log\log(1/h)}} - \limsup_{h\to0^+} \frac{e^{-\frac{a(w+Z)}{2\sqrt{2}}}|e^{-ah/2}-1||X(w,Z)|}{2\sqrt{h\log\log(1/h)}}.
    \end{align*}
    Thanks to \eqref{X:lil} and \eqref{u_1:sing:w_0}, we have
    \begin{align*}
        \P\left\{ e^{-\frac{a(w_0+Z)}{2\sqrt{2}}} \limsup_{h\to0^+}\frac{e^{-ah/2}|Y(Z+\sqrt{2}h)-Y(Z)|}{2\sqrt{h \log\log(1/h)}} = +\infty \right\} = 1
    \end{align*}
    and hence
    \begin{align*}
        \P\left\{ e^{-\frac{a(w+Z)}{2\sqrt{2}}}\limsup_{h\to0^+}\frac{e^{-ah/2}|Y(Z+\sqrt{2}h)-Y(Z)|}{2\sqrt{h \log\log(1/h)}} = +\infty \text{ for all $w\in [w_0, N]$} \right\} = 1.
    \end{align*}
    The preceding together with \eqref{u_1:limsup:ineq} and \eqref{X:lil} implies that
    \begin{align*}
        \P\left\{ \ell_1(w,Z) = +\infty \text{ for all $w \in [w_0,w_0+N]$} \right\} = 1.
    \end{align*}
    Combining this with \eqref{E:u_2:sim:lil:Z} yields
    \begin{align*}
        \P\left\{ \ell(w,Z) = +\infty \text{ for all $w \in [w_0,w_0+N]$} \right\} = 1.
    \end{align*}
    Since the integer $N$ can be arbitrarily large, this completes the proof.
\end{proof}

{\bf Acknowledgments.}
H. Chen is supported by the research grant (VIL73729) from Villum Fonden.
C.Y. Lee is supported in part by the Shenzhen Peacock fund 2025TC0013.
The authors would like to thank the anonymous referees for their helpful comments and suggestions.

\printbibliography
\end{document}